\title{A singular limit in a fractional reaction-diffusion equation with periodic coefficients}
\author{Alexis Léculier}
\begin{document}                     
\maketitle


\newtheorem{theorem}{Theorem}
\newtheorem{lemma}{Lemma}
\newtheorem{definition}{Definition}
\newtheorem{proposition}{Proposition}
\newtheorem{definition-proposition}{Definition-Proposition}
\newtheorem*{notation}{Notation}

\theoremstyle{definition}
\newtheorem{example}{Example}

\theoremstyle{definition}
\newtheorem*{remark}{Remark}

\begin{abstract}
We provide an asymptotic analysis of a non-local Fisher-KPP type equation in periodic media and with a non-local stable operator of order $\alpha \in (0,1)$. We perform a long time-long range scaling in order to prove that the stable state invades the unstable state with a speed which is exponential in time. 
\end{abstract}

\noindent{\bf Key-Words: } Non-local fractional operator, Fisher KPP, asymptotic analysis, exponential speed of propagation, perturbed test function\\
\noindent{\bf AMS Class. No:} {35K57, 35B40, 35Q92.}

\bigskip

\section{Introduction}
\subsection{The equation}
We are interested in the following equation:
\begin{equation}
\label{equation_principale_1}
\left\{
\begin{aligned}
&\partial_t n(x,t) + L^{\alpha} (n) (x,t)= \mu (x) n(x,t) - n(x,t)^2, \ \ \   \ \ (x,t) \in \mathbb{R}^d \times[0,+\infty)\\
&n(x,0) = n_0(x) \in C^{\infty}_c(\mathbb{R}^d, \mathbb{R}^+).
\end{aligned}
\right.
\end{equation}
In the above setting, $\mu$ is a $1$-periodic function, $\alpha \in (0,1)$ is given and the term $L^\alpha$ denotes a fractional elliptic operator which is defined as follow:
\begin{equation}\label{def:L:alpha}
L^{\alpha}(n)(x,t) : = -PV \int_{\mathbb{R}^d} (n(x+h,t)-n(x,t))\beta(x,\frac{h}{|h|})\dfrac{dh}{|h|^{d+2\alpha}},
\end{equation}
where $\beta:\mathbb{R}^d \times S^{d-1} \rightarrow \mathbb{R}$ is a $1$-periodic smooth function such that for all $(x,\theta) \in \mathbb{R}^d \times S^{d-1}$
\[\beta(x,\theta)=\beta(x,-\theta) \ \text{ and } \ 0 < b \leq \beta(x,\theta) \leq B ,\]
with $b$ and $B$ positive constants. When $\beta$ is constant, we recover the classical fractional Laplacian $(-\Delta)^\alpha$. 

The main aim of this paper is to describe the propagation front associated to \eqref{equation_principale_1}. We show that the stable state invades the unstable state with an exponential speed. 

\subsection{The motivation}

Equation \eqref{equation_principale_1} models the growth and the invasion of a species subject to non-local dispersion in a heterogeneous environment. Such models describe the situations where individuals can jump (move rapidly) from one point to the other, for instance because of the wind for seeds or human transportation for animals. The function $n$ stands for the density of the population in position $x$ at time $t$. The diffusion term represented by the operator $L^{\alpha}$ describes the motions of individuals. The "logistic term" $\mu(x)n(x,t)-n(x,t)^2$ represents the growth rate of the population. The heterogeneity of the environment is modeled by the periodic function $\mu$. The regions where $\mu$ is positive represent areas where the species are favored whereas $\mu$ negative prevents the growth of the species. Conversely, the term $-n^2$ characterizes the death term because of some "logistic" considerations, as for example the quantity of food. 

The operator $L^\alpha$ generalizes the fractional Laplacian $(-\Delta)^\alpha$ which models "homogeneous" jumps : the individuals jump in every direction with the same frequency. Whereas the operator $L^\alpha$ models "heterogeneous" jumps : the individuals prefer to jump in the direction where $\beta$ is high. Also, the frequency of jumps will depend on the position $x$ of the individuals. Note that for the 1 dimensional case, for a regular bounded function $n$, $(1-\alpha) L^\alpha(n)(x)$ tends to $-\frac{\beta(x)}{4}n''(x)$ as $\alpha$ tends to $1^{-}$ which corresponds to a heterogeneous local diffusion. Moreover, the function $\beta$ will affect the principal eigenvalue $\lambda_1$ of $L^\alpha - \mu(x) Id$ (and consequently the negativity of $\lambda_1$ which is a criterion for the existence of a positive bounded stationary state). However, the hypothesis $0< b \leq \beta \leq B$ implies that the techniques used for the fractional Laplacian are robust and can be extended to the case of the operator $L^\alpha$.


Equation \eqref{equation_principale_1} was first introduced by Fisher in \cite{Fisher} (1937) and Kolmogorov, Petrovskii and Piscunov in \cite{KPP1937} (1938) in the particular case of a homogeneous environment ($\mu=1$) and a standard diffusion ($L^\alpha = -\Delta$) which corresponds to the case $\alpha =1$ and $\beta =1$. In \cite{Aronson-Weinberger}, Aronson and Weinberger proved a first similar result to our result for the case introduced by Fisher and Kolmogorov, Petrovskii and Piscunov. In this case, the propagation is with a constant speed independently of the direction. In \cite{Freidlin_Gertner}, Freidlin and Gärtner studied the question with a standard Laplacian in a heterogeneous environment ($\mu$ periodic). Using a probabilistic approach, they showed that the speed of the propagation is dependent on the chosen direction $e \in S^{d-1}$. But, the speed $c(e)$ in the direction $e$ is constant. Other proofs of this result, using PDE tools, can be found in \cite{Berestycki_Hamel_Nadirashvili} and \cite{Rossi_KPP}. In the case of the fractional Laplacian and a constant environment, Cabré and Roquejoffre in \cite{Cabre-Roquejoffre} proved the front position is exponential in time (see also for instance \cite{Del-Castillo-Negrete} for some heuristic and numerical works predicting such behavior and \cite{Mirrahimi1} for an alternative proof). Then in \cite{Cabre-Coulon-Roquejoffre}, Cabré, Coulon and Roquejoffre investigate the speed of propagation in a periodic environment modeled by equation \eqref{equation_principale_1} but considering the fractional Laplacian instead of the operator $L^\alpha$. One should underline the fact that in the fractional case, the speed of propagation does not depend anymore on the direction. They proved that the speed of propagation is exponential in time with a precise exponent depending on a periodic principal eigenvalue. 

The objective of this work is to provide an alternative proof of this property using an asymptotic approach known as "approximation of geometric optics". We will be interested in the longtime behavior of the solution $n$. We demonstrate that in the set $\left\{(x,t)\  |\  |x|<e^{\frac{|\lambda_1|t}{d+2\alpha}}\right\}$, as $t$ tends to infinity, $n$ converges to a stationary state $n_+$ and outside of this domain $n$ tends to zero. The main idea in this approach is to perform a long time-long range rescaling to catch the effective behavior of the solution (see for instance \cite{Opt_geom_2}, \cite{Opt_geom_1} and \cite{Opt_geom_3} for the classical Laplacian case). This paper is closely related to \cite{Mirrahimi1} where the authors Méléard and Mirrahimi have introduced such an "approximation" for a model with the fractional Laplacian and a simpler reaction term ($n-n^2$). A very recent work, \cite{Bouin_al}, uses also the techniques introduced in \cite{Mirrahimi1} (as known as the introduction of an adapting rescaling and the investigation of adapted sub and super solution) to investigate an integro-differential homogeneous Fisher-KPP type equation: the operator $L^\alpha$ is replaced by $J \ast n - n$ where the kernel $J$ is fat tailed but does not have singularity at the origin. 

This paper was initially written with a fractional Laplacian. At its completion, we became aware of a preprint by Souganidis and Tarfulea \cite{Souganidis_Tarfulea} which proves a result quite close to ours in the case of spatially periodic stable operators. Our proof is quite different since their approach is more based on the theory of viscosity solutions. We have verified that our approach works for the model treated in \cite{Souganidis_Tarfulea} with no additional idea. We present our result with the operator $L^\alpha$ given by \eqref{def:L:alpha}, where the proof for the fractional Laplacian applies almost word by word. In the course of the paper, we explain the points of our proof that allow to reach the generality of \cite{Souganidis_Tarfulea}.


\subsection{The assumptions}

For the initial data we will assume
\begin{equation} \label{H1} \tag{H1}
n_0 \in C^{\infty}_c(\mathbb{R}^d, \mathbb{R}^+), \ n_0 \not\equiv 0.
\end{equation}
The function $\mu$ is a $1$-periodic function, i.e.
\begin{equation}\label{H2} \tag{H2}
\forall k \in \left\lbrace1,...,d\right\rbrace, \ \mu(x_1,...,x_k+1,...,x_d)=\mu(x_1,...,x_d).
\end{equation}
Under the assumptions on $ \beta $, the operator $L^\alpha -\mu(x) Id$ admits a principal eigenpair $(\phi_1, \lambda_1)$ by the Krein-Rutman Theorem (see \cite{Roquejoffre1}) that is 
\begin{equation}
\label{vecteur_propre}
\left\{
\begin{aligned}
& L^{\alpha} \phi_1 (x)-\mu (x) \phi_1(x) =\lambda_1 \phi_1(x),\ x \in \mathbb{R}^d,\\
&\phi_1 \text{ periodic}, \ \phi_1>0, \ \left\|\phi_1 \right\|=1.
\end{aligned}
\right.
\end{equation} 
To assure the existence of a bounded, positive and periodic steady solution $n_+$ for \eqref{equation_principale_1}, we will assume that the principal eigenvalue $\lambda_1$ is negative :
\begin{equation}\label{H3} \tag{H3}
\lambda_1<0.
\end{equation}
Note that such stationary solution is unique in the class of positive and periodic stationary solutions (see \cite{Berestycki81}).

In section 4, we will study a more general equation : 
\begin{equation}
\label{equation_principale_KPP} 
\left\{
\begin{aligned}
&\partial_t n(x,t) + L^{\alpha} (n) (x,t)= F(x,n(x,t)), \ \ \   \ \ (x,t) \in \mathbb{R}^d \times[0,+\infty)\\
&n(x,0) = n_0(x) \in C^{\infty}_c(\mathbb{R}^d,\mathbb{R}^+).
\end{aligned}
\right.
\end{equation}
We make the following assumptions for $F$:
\begin{equation*}
\label{H4} \tag{H4}
\left\{
\begin{aligned}
&(i) \ \forall s \in \mathbb{R}, \ x \mapsto F(x,s) \text{ is periodic},\\
&(ii) \ F(x,0)=0,\\
&(iii) \  \exists \underline{c},\overline{C}>0 \ \text{ such that } \ \forall (x,s)\in \mathbb{R}^d\times \ \mathbb{R}, \ - \underline{c}\leq \partial_s(\dfrac{F(x,s)}{s})\leq-\overline{C}, \\
&(iv) \ \exists M>0, \forall(x,s)\in \mathbb{R}^d\times [M,+\infty[,\  F(x,s)<0.
\end{aligned}
\right.
\end{equation*}
We will denote $\partial_s (F)(x,0)$ by $\mu(x)$ and we still denote by $(\lambda_1, \phi_1)$ the principal eigenvalue and eigenfunction of $L^\alpha - \mu(x) Id$. We also still suppose \eqref{H3} (i.e. $\lambda_1$ is strictly negative).

\subsection{The main result and the method}

We introduce the following rescaling 
\begin{equation}
\label{rescaling}
(x,t) \longmapsto \left(|x|^{\frac{1}{\varepsilon}}\dfrac{x}{|x|}, \dfrac{t}{\varepsilon}\right).
\end{equation}
We perform this rescaling because one expects that for $t$ large enough, $n$ is close to the stationary state $n_+$ in the following set $\left\{(x,t)\in \mathbb{R}^d \times \mathbb{R}^+ \   |\  |x|<e^{\frac{|\lambda_1|t}{d+2\alpha}}\right\}$ and $n$ is close to $0$ in the set $\left\{(x,t)\in \mathbb{R}^d \times \mathbb{R}^+ \   |\  |x|>e^{\frac{|\lambda_1|t}{d+2\alpha}}\right\}$. The change of variable \eqref{rescaling} will therefore respect the geometries of these sets. We then rescale the solution of \eqref{equation_principale_1} as follows
\[n_{\varepsilon}(x,t)=n(|x|^{\frac{1}{\varepsilon}} \frac{x}{|x|}, \frac{t}{\varepsilon})\] 
and a new steady state
\[n_{+,\varepsilon}(x)=n_+ (|x|^{\frac{1}{\varepsilon}}\frac{x}{|x|}).\]
We prove:
\begin{theorem}\label{theorem_principal_2}
Assuming \eqref{H1}, \eqref{H2} and \eqref{H3}, let $n$ be the solution of \eqref{equation_principale_1}. Then\\
(i) $n_{\varepsilon} \rightarrow 0$, locally uniformly in $\mathcal{A} = \left\{(x,t) \in \mathbb{R}^d \times (0,\infty) | \ \ |\lambda_1 |\ t < (d+2\alpha) \log |x|\right\}$, \\
(ii) $\dfrac{n_{\varepsilon}}{n_{+,\varepsilon}} \rightarrow 1$, locally uniformly in $\mathcal{B} = \left\{(x,t) \in \mathbb{R}^d \times (0,\infty) | \  \   |\lambda_1 |\ t  > (d+2\alpha) \log |x|\right\}$.
\end{theorem}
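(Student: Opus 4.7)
The plan is to follow the approximation-of-geometric-optics strategy used in \cite{Mirrahimi1}, adapted to the heterogeneous fractional operator $L^{\alpha}$. The idea is to sandwich $n$ between two barriers built in the original variables $(X,T)$ whose common transition surface is $|X|=e^{|\lambda_{1}|T/(d+2\alpha)}$; the rescaling \eqref{rescaling} then converts the sandwich directly into the two claimed regimes on $\mathcal{A}$ and $\mathcal{B}$.

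\textbf{Proof of (i).} For the outer statement I would construct a super-solution of the form
$$\bar n(X,T)=A\,\phi_{1}(X)\,\min\!\Bigl(1,\frac{e^{|\lambda_{1}|T}}{|X|^{d+2\alpha}}\Bigr),$$
with $A$ large enough that $\bar n(\cdot,0)\ge n_{0}$ and $A\min\phi_{1}\ge|\lambda_{1}|$ (so the inner branch $A\phi_{1}$, which dominates $n_{+}$ by enlarging $A$ further, is a super-solution: $(\partial_{T}+L^{\alpha}-\mu)(A\phi_{1})+(A\phi_{1})^{2}=A\phi_{1}(\lambda_{1}+A\phi_{1})\ge 0$). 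On the outer branch the crucial input is the asymptotic expansion
$$L^{\alpha}\!\Bigl(\frac{\phi_{1}(X)}{|X|^{d+2\alpha}}\Bigr)=\frac{L^{\alpha}\phi_{1}(X)}{|X|^{d+2\alpha}}+o\!\bigl(|X|^{-d-2\alpha}\bigr)\quad\text{as }|X|\to\infty;$$
combined with $\partial_{T}\bar n=|\lambda_{1}|\bar n$ and the eigenvalue equation $L^{\alpha}\phi_{1}-\mu\phi_{1}=\lambda_{1}\phi_{1}$, this yields $(\partial_{T}+L^{\alpha}-\mu)\bar n=o(\bar n)$, and adding $+\bar n^{2}\ge 0$ produces a super-solution of the full nonlinear equation for $|X|$ large (the bounded transition region being handled by the choice of $A$). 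Taking the $\min$ of two super-solutions is a super-solution in the viscosity sense, which is sufficient for comparison, so $n\le\bar n$, and after rescaling
$$n_{\varepsilon}(x,t)\le A\|\phi_{1}\|_{\infty}\,\exp\!\Bigl(\tfrac{1}{\varepsilon}\bigl(|\lambda_{1}|t-(d+2\alpha)\log|x|\bigr)\Bigr)\longrightarrow 0$$
locally uniformly on $\mathcal{A}$.

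\textbf{Proof of (ii).} For the inner statement I would build a matching sub-solution
$$\underline n(X,T)=c\,\phi_{1}(X)\,\frac{e^{|\lambda_{1}|T}}{1+|X|^{d+2\alpha}}$$
with $c$ small. The same expansion gives $(\partial_{T}+L^{\alpha}-\mu)\underline n=o(\underline n)$, and as long as $c$ is small the remainder can be dominated by $-\underline n^{2}$, so $\underline n$ is a sub-solution of the full equation while it stays below the KPP threshold. Comparison then forces $n(X,T)\ge c'>0$ uniformly on $\{|X|\le(1-\delta)e^{|\lambda_{1}|T/(d+2\alpha)}\}$ for $T$ large. To upgrade this positive lower bound to $n/n_{+}\to 1$, I would restart at a time $T_{0}$ where $n\ge c'$ on a large ball and compare from below with the solution of \eqref{equation_principale_1} issuing from the datum $c'\mathbf 1_{B_{R}}$, whose long-time limit is the unique positive periodic steady state $n_{+}$ by a standard KPP sub-solution/monotonicity argument; together with the easy upper bound $n\le n_{+}$ coming from the super-solution comparison, this yields the required local uniform convergence $n_{\varepsilon}/n_{+,\varepsilon}\to 1$ on $\mathcal{B}$.

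\textbf{Main obstacle.} The technical core of both constructions is the asymptotic expansion of $L^{\alpha}(\phi_{1}(X)/|X|^{d+2\alpha})$ for large $|X|$. The kernel decay $|h|^{-d-2\alpha}$ matches the algebraic weight $|X|^{-d-2\alpha}$ exactly, so both the near-field and far-field contributions of the PV-integral are of the same nominal order; only after splitting the integral carefully and exploiting the periodicity of $\phi_{1}$ and of $\beta(\cdot,\theta)$ to average out the oscillations does one see a strictly lower-order remainder. Carrying this out with the $x$-dependent coefficient $\beta(x,\theta)$, rather than a pure power kernel, is the main new point beyond \cite{Mirrahimi1}, and it is precisely this estimate which fixes the exponent $|\lambda_{1}|/(d+2\alpha)$ in the invasion rate.
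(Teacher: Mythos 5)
Your plan has the right general shape (algebraic-tail barriers attached to $\phi_1$ with transition at $|X|=e^{|\lambda_1|T/(d+2\alpha)}$), but the key analytic claim it rests on is false, and this is precisely the point the paper has to work around. Writing $g(X)=(1+|X|^{d+2\alpha})^{-1}$, the operator $L^\alpha$ applied to such a profile is \emph{not} of lower order at infinity: the mass of the kernel near $h\approx -X$ produces $L^\alpha g(X)\sim -c\,g(X)$, i.e.\ a term of the same order as $g$, so your expansion $L^{\alpha}(\phi_1/|X|^{d+2\alpha})=L^{\alpha}\phi_1/|X|^{d+2\alpha}+o(|X|^{-d-2\alpha})$ does not hold, and $(\partial_T+L^\alpha-\mu)\bar n$ is not $o(\bar n)$. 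In the paper (Lemma \ref{lemme_utile}) the smallness comes from the \emph{time-dependent spatial scale}: $|L^\alpha g(a\cdot)|\le Ca^{2\alpha}g(a\cdot)$ and $|\widetilde K(g(a\cdot),\chi)|\le Ca^{2\alpha-\gamma}g(a\cdot)$ with $a=e^{-(|\lambda_1|t+\delta)/((d+2\alpha)\varepsilon)}$, and even then these errors have uncontrolled sign and cannot be absorbed by $\bar n^{2}$ (which is far smaller than $\bar n$ in the tail); the paper absorbs them by building in a margin, replacing the rate $|\lambda_1|$ by $|\lambda_1|\pm\varepsilon^{2}$ and the shift $\delta$, which yields the spare term $\tfrac{\varepsilon^{2}}{3}f^{M}_\varepsilon$ in Theorem \ref{sub_super_solution_theorem}. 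Your barriers carry no such margin, so the super-solution inequality fails in the far field at moderate times. The sub-solution has an additional structural problem: $c\,\phi_1(X)e^{|\lambda_1|T}/(1+|X|^{d+2\alpha})$ is unbounded in $T$ at fixed $X$, so it cannot stay below the (uniformly bounded) solution; the profile must saturate, as the paper's $\frac{C_m e^{-\delta/\varepsilon}}{1+e^{-t(|\lambda_1|-\varepsilon^{2})/\varepsilon-\delta/\varepsilon}|x|^{(d+2\alpha)/\varepsilon}}$ does, and it is the saturation (the $\frac{A}{1+A}$ identity in the time-derivative computation) that compensates the quadratic term, not a choice of small $c$ — for a sub-solution the $+\underline n^{2}$ term works against you, so "dominated by $-\underline n^{2}$" is backwards.

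Part (ii) is also not closed by your bootstrap. The upper bound "$n\le n_+$" is unavailable (nothing forces $n_0\le n_+$); what is true, and fixable, is comparison with the decreasing solution issued from $Cn_+$. More seriously, for the lower bound the points of a compact subset of $\mathcal{B}$ correspond, in the original variables, to $|X|\sim e^{\kappa T}$ with $\kappa$ up to $|\lambda_1|/(d+2\alpha)$, i.e.\ points escaping exponentially fast; comparing once with the solution issued from $c'\mathbf{1}_{B_R}$ on a fixed ball only gives convergence to $n_+$ locally uniformly in $X$, which after rescaling covers essentially $|x|\le 1$, not $\mathcal{B}$. You would need restarts at times $\theta T$ centered at the moving points, uniformity in the center (via periodicity) and a diverging time window — none of which is in your sketch. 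The paper avoids this entirely: from the two-sided bounds \eqref{inequation_2} it performs the Hopf--Cole transform, identifies the limit $u=\min(0,|\lambda_1|t-(d+2\alpha)\log|x|)$, and then proves both $n_\varepsilon/n_{+,\varepsilon}\ge 1+o(1)$ and $\le 1+o(1)$ on compacts of $\mathcal{B}$ by a perturbed-test-function argument at min/max points of $u_\varepsilon-\varphi_\varepsilon-u_{+,\varepsilon}$, using Lemma \ref{lemme_utile} once more to control $L^\alpha_\varepsilon n_\varepsilon\, n_\varepsilon^{-1}$ and the cross term $\widetilde K_\varepsilon$; note in particular that the upper bound $\le 1+o(1)$ is not "easy" and requires the same machinery as the lower one.
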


To provide the main idea to prove Theorem \ref{theorem_principal_2}, we first explain the main element of the proof in the case of constant environment which was introduced in \cite{Mirrahimi1}.\\
A central argument to prove such result in the case of a constant environment, is that, using the rescaling \eqref{rescaling}, as $\varepsilon \rightarrow 0$, the term $\left((-\Delta)^\alpha (n)n^{-1}\right)(|x|^{\frac{1}{\varepsilon}-1}x,\frac{t}{\varepsilon})$ vanishes. More precisely, one can provide a sub and a super-solution to the rescaled equation which are indeed a sub and a super-solution to a perturbation of an ordinary differential equation derived from \eqref{equation_principale_1} by omitting the term $(-\Delta)^\alpha$. They also have the property that when one applies the operator $f \mapsto (-\Delta)^\alpha (f) f^{-1}$ to such functions, the outcome is very small and of order $\mathcal{O}(\varepsilon^2)$ as $\varepsilon$ tends to $0$.\\
In the case of periodic $\mu$, we use the same idea. However, in this case, the sub and super-solutions are multiplied by the principal eigenfunction and, the term $\left(L^\alpha (n) n^{-1} \right)(|x|^{\frac{1}{\varepsilon}-1}x,\frac{t}{\varepsilon})$ will not just tend to $0$ as in \cite{Mirrahimi1} but also compensate the periodic media. To prove the convergence of $n_\varepsilon$, dealing with this periodic term, we use the method of perturbed test functions from the theory of viscosity solutions and homogenization (introduced by Evans in \cite{Evans_visco_1} and \cite{Evans_visco_2}). Note that we also generalize the arguments of \cite{Mirrahimi1} to deal with a more general integral term $L^\alpha$ while in \cite{Mirrahimi1}, only the case of the fractional Laplacian was considered. In the last part, we will also generalize Theorem \ref{theorem_principal_2} to the case of Fisher-KPP reaction term:


\begin{theorem}
\label{theorem_principal_2_KPP}
Assuming \eqref{H1}, \eqref{H2}, \eqref{H3} and \eqref{H4}, let $n$ be the solution of \eqref{equation_principale_KPP}. Then
\\
(i) $n_{\varepsilon} \rightarrow 0$, locally uniformly in $\mathcal{A} = \left\{(x,t) \in \mathbb{R}^d \times (0,\infty) | \ \ |\lambda_1 |\ t < (d+2\alpha) \log |x|\right\}$, \\
(ii) $\dfrac{n_{\varepsilon}}{n_{+,\varepsilon}} \rightarrow 1$, locally uniformly in $\mathcal{B} = \left\{(x,t) \in \mathbb{R}^d \times (0,\infty) | \  \   |\lambda_1 |\ t  > (d+2\alpha) \log |x|\right\}$.
\end{theorem}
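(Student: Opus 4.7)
The plan is to reduce Theorem~\ref{theorem_principal_2_KPP} to Theorem~\ref{theorem_principal_2} by exploiting the two-sided KPP control that follows from \eqref{H4}. Integrating the bounds $-\underline{c} \leq \partial_s(F/s) \leq -\overline{C}$ from $0$ and using $F(x,0)=0$, $\partial_s F(x,0)=\mu(x)$, one obtains the pointwise squeeze
\[
\mu(x)s - \underline{c}\, s^2 \;\leq\; F(x,s) \;\leq\; \mu(x)s - \overline{C}\, s^2, \qquad x\in\mathbb{R}^d,\ s\geq 0,
\]
so that the full nonlinearity is trapped between two logistic reactions sharing the linearization $\mu$. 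A preliminary step is to establish existence and uniqueness of a positive bounded periodic stationary state $n_+$: since $\lambda_1<0$, $\delta\phi_1$ is a stationary sub-solution for $\delta$ small (using $L^\alpha(\delta\phi_1)-F(x,\delta\phi_1)\leq \delta\phi_1(\lambda_1+\underline{c}\delta\|\phi_1\|_\infty)<0$), while the constant $M$ from \eqref{H4}(iv) is a super-solution; the strict concavity of $s\mapsto F(x,s)/s$ yields uniqueness.

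For part (i), the upper bound $F(x,n)\leq\mu(x)n$ means every solution of \eqref{equation_principale_KPP} is a sub-solution of the \emph{linear} equation $\partial_t v + L^\alpha v = \mu v$. I would then transplant the super-solution of Theorem~\ref{theorem_principal_2}, namely $\bar n(x,t) = A\, e^{|\lambda_1| t}\phi_1(x)(1+|x|^2)^{-(d+2\alpha)/2}$, and verify it directly against the linear equation: the identity $L^\alpha\phi_1 = (\mu+\lambda_1)\phi_1$ produces the eigenvalue factor $-|\lambda_1|\bar n$ which cancels $\partial_t \bar n$, while the commutator between $L^\alpha$ and the slowly-decaying weight contributes a remainder of order $\bar n/(1+|x|^2)^{\alpha}$ by the asymptotic analysis of fractional operators on such profiles. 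Rescaling via \eqref{rescaling} then gives $\bar n_\varepsilon \to 0$ locally uniformly on $\mathcal{A}$.

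For part (ii), the lower bound $F(x,n)\geq \mu(x)n - \underline{c} n^2$ makes any sub-solution of the logistic equation $\partial_t u + L^\alpha u = \mu u - \underline{c} u^2$ automatically a sub-solution of \eqref{equation_principale_KPP}. I would take
\[
\underline{n}(x,t) = \min\!\Bigl(n_+(x),\; c_0\, e^{|\lambda_1|t}\phi_1(x)(1+|x|^2)^{-(d+2\alpha)/2}\Bigr),
\]
with $c_0$ small enough that the second argument starts below $\delta\phi_1$ at $t=0$ on the support of $n_0$. Away from the gluing set, the second argument is a sub-solution of the linear equation, hence of the logistic one as long as it remains small; $n_+$ is of course a stationary sub-solution. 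The comparison principle and the perturbed test function argument used to prove Theorem~\ref{theorem_principal_2} apply verbatim, the periodic term $L^\alpha\phi_1/\phi_1$ being treated exactly as there, and yield $n_\varepsilon/n_{+,\varepsilon}\to 1$ locally uniformly on $\mathcal{B}$.

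The main obstacle will be justifying that $\underline{n}$, defined as a minimum, is a genuine viscosity sub-solution of the non-local equation: because $L^\alpha$ samples values at arbitrarily large distances, the usual local min-of-two-sub-solutions argument must be combined with a careful tail estimate showing that replacing the logistic tail by $n_+$ only \emph{increases} the operator $L^\alpha$, so that the inequality still holds across the gluing set. Once this point is cleared, the rest of the proof parallels Theorem~\ref{theorem_principal_2} step by step, with the KPP squeeze above handling every occurrence of the specific reaction $\mu n - n^2$.
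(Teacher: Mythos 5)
Your reduction via the squeeze $\mu(x)s-\underline{c}s^2\leq F(x,s)\leq \mu(x)s-\overline{C}s^2$ is indeed the paper's starting point, but both halves of your argument have genuine gaps. For (i), the candidate $\bar n=Ae^{|\lambda_1|t}\phi_1(x)(1+|x|^2)^{-(d+2\alpha)/2}$ is \emph{not} a super-solution of the linear equation: after the exact cancellation $\partial_t\bar n+(\lambda_1+\mu)\phi_1\,g\,Ae^{|\lambda_1|t}-\mu\bar n=0$, what remains is $Ae^{|\lambda_1|t}\bigl(\phi_1 L^{\alpha}g-\widetilde K(\phi_1,g)\bigr)$, and for a tail $g\sim|x|^{-(d+2\alpha)}$ the nonlocal term is of the \emph{same} order as $g$ (the operator sees the mass near the origin, so $L^{\alpha}g(x)\approx -c\,g(x)$ with $c$ of order one), not $O\bigl(g\,|x|^{-2\alpha}\bigr)$ as you claim, and its sign is unfavorable; there is no margin left to absorb it. This is precisely why the paper's super-solution carries the extra rate $|\lambda_1|+\varepsilon^2$ and the shift $e^{-\delta/\varepsilon}$ inside the tail, so that Lemma \ref{lemme_utile} applied to $g(a\,\cdot)$ with $a$ exponentially small makes the nonlocal error $O(a^{2\alpha})\psi_\varepsilon$, dominated by the $\varepsilon^2\psi_\varepsilon$ margin (and the quadratic term). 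For (ii), your glued function $\min\bigl(n_+,c_0e^{|\lambda_1|t}\phi_1 g\bigr)$ goes in the wrong direction: the minimum of two sub-solutions is not a sub-solution (that is the max), the nonlocal gluing issue you flag is real and unresolved, and the conclusion it would deliver—$n\geq n_+$ behind the front at finite time—is false whenever $n_0\leq n_+$, $n_0\not\equiv n_+$, by the comparison principle; so the construction cannot work as stated. You also never address the initial comparison outside $\mathrm{supp}\,n_0$, which requires the algebraic-tail bound at $t=1$ (\eqref{H1'}), obtained in the paper from the squeeze and the heat-kernel estimates.

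More fundamentally, even granting a lower barrier, your proposal offers no mechanism for the upper bound $n_\varepsilon/n_{+,\varepsilon}\leq 1+o(1)$ in $\mathcal{B}$, and the claim that the perturbed test function argument of Theorem \ref{theorem_principal_2} applies ``verbatim'' misses the one genuinely new ingredient of the KPP case. Dividing the equation by $n_\varepsilon$ now produces $F_\varepsilon(x,n_\varepsilon)/n_\varepsilon$ instead of $\mu_\varepsilon-n_\varepsilon$, so at the minimum (resp. maximum) point $(x_\varepsilon,t_\varepsilon)$ of $u_\varepsilon-\underline{\varphi}_\varepsilon-u_{+,\varepsilon}$ one only gets
\[
o(1)\leq \dfrac{F_\varepsilon(x_\varepsilon,n_{+,\varepsilon})}{n_{+,\varepsilon}}-\dfrac{F_\varepsilon(x_\varepsilon,n_{\varepsilon})}{n_{\varepsilon}},
\]
and converting this into $n_\varepsilon(x_\varepsilon,t_\varepsilon)\geq n_{+,\varepsilon}(x_\varepsilon)+o(1)$ requires the strict monotonicity of $s\mapsto F(x,s)/s$ from \eqref{H4}(iii) together with the mean value theorem (argued by contradiction in the paper); the symmetric step gives the upper bound. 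That argument, plus the two-case comparison of $|x_\varepsilon|$ and $|x_0|$ to transfer the inequality to $(x_0,t_0)$, is the actual content of Section 4, and it is absent from your proposal.
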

The proof of this Theorem follows from an adaptation of the proof of Theorem \ref{theorem_principal_2}.

In section 2, we introduce preliminary results and technical tools. In section 3, after  the rescaling, we provide a sub and a super-solution and demonstrate Theorem \ref{theorem_principal_2}. In section 4, we provide the points of the proof of Theorem \ref{theorem_principal_2_KPP} that differ from the proof of Theorem \ref{theorem_principal_2}.

\section{Preliminary results}

We first state a classical result on the fractional heat kernel.
\begin{proposition}\cite{Chen-Kumagai}
\label{heat_kernel}
There exists a positive constant $C$ larger than $1$ such that the heat kernel $p_\alpha(x,y,t)$ associated to the operator $\partial_t + L^\alpha$ verifies the following inequalities for $t>0$ :
\begin{equation}\label{heat_kernel_1}
C^{-1} \times \min(t^{-\frac{d}{2\alpha}}, \frac{t}{|x-y|^{d+2\alpha}})\leq p_\alpha (x,y,t) \leq C \times  \min(t^{-\frac{d}{2\alpha}}, \frac{t}{|x-y|^{d+2\alpha}}).
\end{equation}

\end{proposition}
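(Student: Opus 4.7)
The plan is to establish the two-sided stable-like heat kernel estimate by placing the operator $L^\alpha$ within the general Chen--Kumagai framework for symmetric pure-jump processes. The operator is symmetric (because $\beta(x,\theta)=\beta(x,-\theta)$), and its jump kernel $J(x,y) = \beta(x,(y-x)/|y-x|)|y-x|^{-d-2\alpha}$ is comparable to that of the fractional Laplacian uniformly in $x$, since $b \le \beta \le B$. This pointwise comparability of jump kernels is the essential structural input that allows all the constants to be tracked.

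For the on-diagonal upper bound, I would derive a Nash-type inequality for the Dirichlet form
$$ \mathcal{E}(u,u)=\tfrac12 \iint_{\mathbb{R}^d\times \mathbb{R}^d}(u(x)-u(y))^2 J(x,y) \, dx \, dy, $$
simply by comparing it term by term with the Dirichlet form of $(-\Delta)^\alpha$, for which the Nash inequality $\|u\|_2^{2+4\alpha/d} \le C\,\mathcal{E}_0(u,u)\,\|u\|_1^{4\alpha/d}$ is classical. A standard Nash--Moser iteration (equivalently the Davies ultracontractivity argument) then produces $p_\alpha(x,y,t) \le C t^{-d/(2\alpha)}$ for all $x,y$.

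The off-diagonal upper bound is obtained through Meyer's decomposition: write $L^\alpha = L^\alpha_r + K_r$, where $L^\alpha_r$ retains only jumps of length $\le r$ and $K_r$ is a bounded operator. Taking $r \sim |x-y|/2$, the truncated semigroup satisfies a Gaussian-type estimate via the Davies method with exponential weights, and a Duhamel expansion in $K_r$ together with the L\'evy system formula then gives the polynomial tail bound $p_\alpha(x,y,t) \le C\, t/|x-y|^{d+2\alpha}$ in the long-range regime $t \le |x-y|^{2\alpha}$. Combined with Step~1, this yields the full upper bound in \eqref{heat_kernel_1}. The on-diagonal lower bound $p_\alpha(x,x,t) \ge C^{-1}t^{-d/(2\alpha)}$ follows the standard path: conservation of mass $\int p_\alpha(x,y,t)\,dy = 1$ forces, via the already-obtained upper bound, a substantial fraction of the mass to lie in the ball of radius $\rho \sim t^{1/(2\alpha)}$, and a parabolic Harnack inequality for stable-like operators transfers that mass back to the diagonal. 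The off-diagonal lower bound uses the L\'evy system once more: the probability of a single direct jump from a small neighborhood of $x$ to a small neighborhood of $y$ is at least $C^{-1}\,t/|x-y|^{d+2\alpha}$, and concatenating with the near-diagonal lower bound inside each neighborhood yields the stated estimate.

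The main obstacle is that every one of these ingredients must be proved with constants depending only on $d,\alpha,b,B$, uniformly in the spatial variable. In particular the parabolic Harnack inequality used to pass from the integral lower bound to the pointwise diagonal bound is nontrivial for a heterogeneous jump kernel; this is exactly the point at which the uniform two-sided bound $0<b\le \beta\le B$ is used essentially, and it is the contribution of \cite{Chen-Kumagai} to set up a framework in which this step works without further regularity on $\beta$.
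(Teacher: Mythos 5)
First, note that the paper itself offers no proof of this proposition: it is quoted directly from the reference \cite{Chen-Kumagai}, so your sketch is being compared with the cited literature rather than with an argument in the text. Your outline -- Nash inequality for the on-diagonal upper bound, Meyer decomposition plus Davies' method for the off-diagonal upper bound, conservativeness plus a Harnack/regularity step for the near-diagonal lower bound, and the L\'evy system with chaining for the off-diagonal lower bound -- is indeed the standard Chen--Kumagai scheme for \emph{symmetric} stable-like Dirichlet forms, with constants depending only on $d,\alpha,b,B$.

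There is, however, a genuine gap at the very first step, on which everything else rests: the hypothesis $\beta(x,\theta)=\beta(x,-\theta)$ does \emph{not} make $L^\alpha$ symmetric. It only says that, for each fixed $x$, the L\'evy measure is symmetric in the jump variable $h$ (which is what makes the principal value well defined); symmetry of the operator in $L^2(dx)$ would require $J(x,y)=J(y,x)$ for $J(x,y)=\beta\bigl(x,\tfrac{y-x}{|y-x|}\bigr)|x-y|^{-d-2\alpha}$, i.e.\ $\beta\bigl(x,\tfrac{y-x}{|y-x|}\bigr)=\beta\bigl(y,\tfrac{y-x}{|y-x|}\bigr)$, which fails for a genuinely $x$-dependent $\beta$. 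Consequently the quadratic form $\mathcal{E}$ you write down is not the Dirichlet form of the semigroup generated by $L^\alpha$, and the steps you invoke -- ultracontractivity from a Nash inequality, Davies' exponential-weight method for the truncated form, and the L\'evy-system lower bound arguments -- are all formulated for symmetric forms and do not apply as written. To treat the operator of the paper one must either work in the non-symmetric framework (a parametrix/Levi construction exploiting the smoothness of $\beta$ in $x$, as in the heat kernel results of Chen and Zhang for non-symmetric stable-like operators), or handle the non-symmetric bilinear form together with its dual under a sector condition; in both routes the constants still depend only on $d$, $\alpha$, $b$, $B$ and the regularity of $\beta$ in $x$. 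If one adds the hypothesis $J(x,y)=J(y,x)$, your outline is the correct one and matches the cited result; as stated for this paper's $L^\alpha$, the symmetry claim is the missing (and false) ingredient.
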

The proof of this proposition is given in \cite{Chen-Kumagai}. \\
Now we use this proposition to demonstrate that beginning with a positive compactly supported initial data leads to a solution with algebraic tails. 

\begin{proposition}
Assuming \eqref{H1}, then there exists two constant $c_m$ and $c_M$ depending on $n_0$, $d$, $\alpha$ and $\mu$ such that :
 \[\dfrac{c_m}{1+|x|^{d+2\alpha}} \leq n(x,1) \leq \dfrac{c_M}{1+|x|^{d+2\alpha}}.\]
\end{proposition}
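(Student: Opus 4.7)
The plan is to sandwich $n(x,1)$ between two multiples of the solution $u$ of the \emph{linear} equation $\partial_t u + L^\alpha u = 0$ with initial datum $n_0$, and then invoke the two-sided heat-kernel bounds of Proposition \ref{heat_kernel}. This strategy works because the reaction term is dominated, on both sides, by a linear multiple of $n$.

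\textbf{Step 1: a priori $L^\infty$ bound.} Setting $N:=\max(\|\mu\|_\infty,\|n_0\|_\infty)$, the constant function $N$ satisfies $\partial_t N+L^\alpha N = 0 \geq \mu(x)N - N^2$ since $\mu(x)-N\leq 0$. The parabolic comparison principle for $\partial_t + L^\alpha$ then gives $0\leq n(x,t)\leq N$ for all $(x,t)$.

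\textbf{Step 2: upper bound.} From $\partial_t n + L^\alpha n = \mu n - n^2 \leq \|\mu\|_\infty n$, the function $v(x,t):=e^{-\|\mu\|_\infty t} n(x,t)$ satisfies $\partial_t v+L^\alpha v\leq 0$. Comparing with $u$ (which has the same initial datum) gives $n(x,1)\leq e^{\|\mu\|_\infty}u(x,1)$. Writing $u(x,1)=\int_{\mathbb{R}^d} p_\alpha(x,y,1) n_0(y)\,dy$, using the upper estimate of Proposition \ref{heat_kernel} with $t=1$, and using that $\mathrm{supp}(n_0)\subset B(0,R_0)$ (so $|x-y|\geq |x|/2$ once $|x|\geq 2R_0$), one concludes
\[
n(x,1)\leq e^{\|\mu\|_\infty}\,C\,\|n_0\|_{L^1}\,\min\!\left(1,\tfrac{1}{|x-y|^{d+2\alpha}}\right)\leq \frac{c_M}{1+|x|^{d+2\alpha}}.
\]

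\textbf{Step 3: lower bound.} Using the bound from Step 1, $\mu(x)n-n^2\geq -(\|\mu\|_\infty+N)n=:-Kn$, so $w(x,t):=e^{Kt}n(x,t)$ satisfies $\partial_t w+L^\alpha w\geq 0$, hence $w(x,1)\geq u(x,1)$ by comparison, i.e.\ $n(x,1)\geq e^{-K}u(x,1)$. The lower bound in Proposition \ref{heat_kernel} at $t=1$ gives, for $y\in \mathrm{supp}(n_0)\subset B(0,R_0)$, $p_\alpha(x,y,1)\geq C^{-1}\min(1,|x-y|^{-(d+2\alpha)})\geq C^{-1}(1+(|x|+R_0)^{d+2\alpha})^{-1}$. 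Since $n_0\geq 0$ is $C^\infty_c$ and not identically zero, $\int n_0>0$, and we obtain the matching lower bound
\[
n(x,1)\geq e^{-K}C^{-1}\|n_0\|_{L^1}\,(1+(|x|+R_0)^{d+2\alpha})^{-1}\geq \frac{c_m}{1+|x|^{d+2\alpha}}.
\]

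\textbf{Main obstacle.} There is no deep difficulty here; the whole argument rests on (i) having a good comparison principle for $\partial_t+L^\alpha$ against bounded continuous data, and (ii) the fact that the two-sided heat-kernel estimate of Proposition \ref{heat_kernel} already has the right tail $|x|^{-(d+2\alpha)}$, which is exactly what we want to propagate. The only point to check carefully is the algebraic manipulation that turns $\min(1,|x-y|^{-(d+2\alpha)})$ with $y$ ranging over a fixed compact set into $(1+|x|^{d+2\alpha})^{-1}$ up to constants, which is routine.
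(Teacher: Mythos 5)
Your proposal is correct and follows essentially the same route as the paper: an $L^\infty$ bound by $\max(\|n_0\|_\infty,\|\mu\|_\infty)$, then sandwiching $n$ between exponential multiples of the linear semigroup $e^{-tL^\alpha}n_0$ (the paper phrases this as comparing with explicit solutions of $\partial_t \underline{n}+L^\alpha \underline{n}=-2M\underline{n}$ and $\partial_t \overline{n}+L^\alpha \overline{n}=\max|\mu|\,\overline{n}$, which is the same thing up to the choice of constant in the exponent), and finally the two-sided kernel bounds of Proposition \ref{heat_kernel} at $t=1$ combined with the compact support of $n_0$ to extract the $(1+|x|^{d+2\alpha})^{-1}$ tails. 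The only cosmetic difference is that you handle the tail comparison by elementary inequalities ($|x-y|\geq|x|/2$, $|x-y|\leq|x|+R_0$) where the paper uses dominated convergence plus compactness; both are fine.
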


\begin{proof}
First, we define $M:=\max(\max n_0, \max |\mu|)$, we easily note that the constant functions $0$ and $M$ are respectively sub and super-solution to our problem. Then, thanks to the comparison principle (which is given in \cite{Cabre-Roquejoffre}), we have the following inequalities, for all $(x,t) \in \mathbb{R}^d \times [0,+\infty[$:
\[0 \leq n(x,t) \leq M.\]
Let $\underline{n}$ and $\overline{n}$ be the solutions of the two following systems : 
\begin{equation}
\left\{
\begin{aligned}
&\partial_t \underline{n} + L^\alpha \underline{n} =-2M \underline{n}, \\
&\underline{n} (x,0) = n_0(x),
\end{aligned}
\right.
\label{SUB}
\end{equation}
and 
 \begin{equation}
\left\{
\begin{aligned}
&\partial_t \overline{n} + L^\alpha \overline{n} = \max |\mu| \  \overline{n}, \\
&\overline{n} (x,0) = n_0(x).
\end{aligned}
\right.
\label{SUP}
\end{equation}
Thanks to Proposition \ref{heat_kernel}, we can solve \eqref{SUB} and find
\[\underline{n}(x,t)=e^{-2Mt} \int_{\mathbb{R}^d}p_\alpha(x,y,t)  n_0(y)dy,\]
Thus for any $t>0$, we obtain
\begin{align*}
& e^{-2Mt}\int_{supp(n_0)} C^{-1}\times n_0(y) \min(t^{-\frac{d}{2\alpha}}, \frac{t}{|x-y|^{d+2\alpha}})dy \leq \underline{n}(x,t)\\
&\Rightarrow  e^{-2M}\int_{supp(n_0)} C^{-1}\times n_0(y) \min(1, \frac{1}{|x-y|^{d+2\alpha}})dy \leq \underline{n}(x,1).
\end{align*} 
Thanks to the dominated convergence theorem, we have:
\[(1+|x|^{d+2\alpha})\times  e^{-2M}\int_{supp(n_0)} C^{-1}\times n_0(y) \min(1, \frac{1}{|x-y|^{d+2\alpha}})dy\underset{|x|\rightarrow \infty}{\longrightarrow}  e^{-2M}\int_{supp(n_0)} C^{-1} \times n_0(y) dy.\]
Therefore, we conclude by a compactness argument that for any $x \in \mathbb{R}^d$:
\begin{equation} 
\frac{e^{-2M}C^{-1}}{(1+|x|^{d+2\alpha})}\leq \underline{n}(x,1),
\end{equation}
where the last C is a new constant depending only on $n_0$. Moreover thanks to the comparison principle, we have that for any $t \geq 0$
\begin{equation}\label{sub_solution}
\underline{n}(x,t) \leq n(x,t) \Rightarrow \frac{e^{-2M}C^{-1}}{(1+|x|^{d+2\alpha})}\leq n(x,1).
\end{equation}
In the same way, we can solve \eqref{SUP} and the solution is 
\[\overline{n}(x,t) = e^{\max |\mu| t}\int_{\mathbb{R}^d} n_0(y)\times p_\alpha (t,x,y)dy.\]
Using similar arguments, we get that for all $x \in \mathbb{R}^d$, 
\begin{equation}
\label{super_solution}
n(x,1) \leq \overline{n}(x,1) \leq \dfrac{C e^{\max |\mu| t}}{(1+|x|^{d+2\alpha})}.
\end{equation}
By combining \eqref{sub_solution} and \eqref{super_solution} together, we finally obtain 
\begin{equation} \label{inegalité_H1'}
\dfrac{c_m}{1+|x|^{d+2\alpha}} \leq n(x,1) \leq \dfrac{c_M}{1+|x|^{d+2\alpha}}.
\end{equation}
\end{proof} 

We next provide a technical lemma which will be useful all along the article. The main ideas of the proof of the lemma come from \cite{Mirrahimi1} by S. Méléard and S. Mirrahimi for Point $(i)$ and \cite{Coulon} by A.C. Coulon Chalmin for Point $(ii)$. To this end, we first introduce the computation of $L^\alpha$ of a product of functions:
\[L^{\alpha} (fg) (x,t) = f(x,t) L^{\alpha} g(x,t) +  g(x,t) L^{\alpha} f(x,t) - \widetilde{K}(f,g) (x,t), \]
with,  
\[\widetilde{K}(f,g) (x,t):= C' \ PV \int_{\mathbb{R}^d}\frac{(f(x,t)-f(x+h,t))(g(x,t)-g(x+h,t))}{|h|^{d+2\alpha}}\beta(x,\frac{h}{|h|})dy.\]

\begin{lemma}
\label{lemme_utile}
Let $\gamma$ be a positive constant such that
\[\gamma \in  \left\{
    \begin{array}{ll}
        [0, 2\alpha[ & \mbox{if } \alpha<\frac{1}{2} \\
        ]2\alpha-1,1[ & \mbox{if } \frac{1}{2}\leq \alpha < 1,
    \end{array}
		\right.
\]
$\chi : \mathbb{R} \rightarrow \mathbb{R}^d$ be a $C^1$ periodic, strictly positive function and $g(x):=\frac{1}{1+|x|^{d+2\alpha}}$. Then there exists a positive constant $C$, which does not depend on $x$, such that, for all $x \in \mathbb{R}^d$: \\
(i) for all $a>0$,
\[| L^{\alpha} g (ax) | \leq a^{2 \alpha} C g(ax), \]
(ii) for all $a \in ]0,1]$, 
\[|\widetilde{K}(g(a .),\chi)(x) | \leq \frac{C a ^{2 \alpha - \gamma}}{1+(a |x|)^{d+2\alpha}} = C a^{2 \alpha - \gamma} g(a x).\]
\end{lemma}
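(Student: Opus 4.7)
Both estimates rest on the same rescaling device: the substitution $h=k/a$ transforms $dh/|h|^{d+2\alpha}$ into $a^{2\alpha}\,dk/|k|^{d+2\alpha}$, so $L^\alpha(g(a\cdot))(x)$ and $\widetilde K(g(a\cdot),\chi)(x)$ both become $a^{2\alpha}$ times a $k$-integral evaluated at the point $y:=ax$. The task then reduces, in both cases, to an $x$-independent bound of that integral by a multiple of $g(y)$ (respectively $a^{-\gamma}g(y)$). For (i), I exploit the symmetry $\beta(x,\theta)=\beta(x,-\theta)$ to rewrite the principal value as the absolutely convergent integral of $g(y+k)+g(y-k)-2g(y)$ against $|k|^{-(d+2\alpha)}$ and split at $|k|=|y|/2$. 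On the near region, the second-order Taylor formula together with the pointwise bound $|D^2 g(\xi)|\leq C|y|^{-(d+2\alpha+2)}$ for $|\xi-y|\leq|y|/2$ yields an integrand of order $|k|^{2-d-2\alpha}/|y|^{d+2\alpha+2}$, whose radial integral is $O(|y|^{-(d+4\alpha)})\leq Cg(y)$ (using $\alpha<1$). On the far region, I use $|g(y\pm k)-g(y)|\leq g(y)+g(y\pm k)$: the $g(y)$-piece gives $Cg(y)/|y|^{2\alpha}$, while the $g(y\pm k)$-piece is handled by the change of variables $u=y+k$ and a further split at $|u|=|y|/4$, using $\int g\,du<\infty$ near the origin of $u$ and the pointwise decay of $g$ elsewhere. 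The case $|y|\leq 1$ is trivial since $g$ is bounded below there.

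For (ii), the substitution yields
\[
\bigl|\widetilde{K}\bigl(g(a\cdot),\chi\bigr)(x)\bigr|\leq Ca^{2\alpha}\int_{\mathbb{R}^d}\frac{|g(y)-g(y+k)|\,|\chi(x)-\chi(x+k/a)|}{|k|^{d+2\alpha}}\,dk.
\]
The periodicity and $C^1$-regularity of $\chi$ give $|\chi(x)-\chi(x+k/a)|\leq C\min\bigl((|k|/a)^{\gamma},\,1\bigr)$ for any $\gamma\in[0,1]$, and I use this region by region. On $\{|k|\leq a\}$ I insert the H\"older bound $(|k|/a)^{\gamma}$ and the first-order Taylor estimate $|g(y)-g(y+k)|\leq C|k|/|y|^{d+2\alpha+1}$; the resulting radial integral $\int_0^a r^{\gamma-2\alpha}\,dr$ converges precisely when $\gamma>2\alpha-1$. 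On $\{a<|k|\leq|y|/2\}$ I use the $L^\infty$ bound on $\chi$ and again Taylor on $g$. On $\{|k|>|y|/2\}$ I use the $L^\infty$ bound on $\chi$ and the estimate $|g(y)-g(y+k)|\leq g(y)+g(y+k)$, handled exactly as in (i). A direct computation using $a\leq 1$ and $|y|\geq 1$ shows that each of the three contributions is $\leq Ca^{2\alpha-\gamma}g(y)$. The conditions $\gamma\leq 1$ (from the $C^1$ interpolation), $\gamma>2\alpha-1$ (from the first region), and $\gamma<2\alpha$ (so the factor $a^{2\alpha-\gamma}$ remains meaningful as $a\to 0$) together produce exactly the two ranges stated in the lemma.

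The main technical obstacle is to avoid a $|y|^{\gamma}$-loss in the far region of (ii). If the H\"older bound $(|k|/a)^{\gamma}$ is used uniformly—in particular on $\{|k|>|y|/2\}$—the problem reduces to proving $\int|g(y)-g(y+k)|/|k|^{d+2\alpha-\gamma}\,dk\leq Cg(y)$, and the $g(y+k)$-piece, after substituting $u=y+k$ and restricting to $\{|u|\leq|y|/4\}$, gives only $C\|g\|_{L^1}/|y|^{d+2\alpha-\gamma}=C|y|^{\gamma}g(y)$, which overshoots by a factor $|y|^{\gamma}$ precisely because $g\in L^1(\mathbb{R}^d)$ carries non-vanishing mass near the origin. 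Keeping the sharp $L^\infty$ bound on $\chi$ in the far region (rather than the H\"older one) removes this extra factor and delivers the claimed $g(ax)$ decay on the right-hand side.
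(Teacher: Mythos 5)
Your strategy is essentially the one in the paper: rescale by $h=k/a$ to pull out the factor $a^{2\alpha}$ and reduce to a unit-scale estimate at $y=ax$; for (i) split near/far relative to $|y|$ and use the decay of $\nabla g$, $D^2g$ and the tail of $|k|^{-(d+2\alpha)}$; for (ii) use the interpolated bound $|\chi(x)-\chi(x+k/a)|\leq C\min\bigl((|k|/a)^{\gamma},1\bigr)$ only near the singularity (which is where the constraint $\gamma>2\alpha-1$ appears) and the $L^{\infty}$ bound on $\chi$ in the far region to avoid the $|y|^{\gamma}$ loss. This is exactly the mechanism of the paper's proof (there the near region is $B(x,1)$ with the H\"older bound on the $\chi$-increment, the far region uses $\|\chi\|_{\infty}$ and $|g(x)-g(\tilde x)|\leq g(x)+g(\tilde x)$), so up to the choice of splitting radii the two arguments coincide.

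There is, however, one uncovered case in your part (ii): you carry out the three-region estimate only for $|y|=|ax|\geq 1$ and never return to $|ax|<1$. This case is not covered by the argument as stated: your third region is "handled exactly as in (i)", but in (i) the small-$|y|$ case was dismissed because $g$ is bounded below there, whereas in (ii) you must also produce the factor $a^{2\alpha-\gamma}$, and the far-region bound $\int_{|k|>|y|/2}|k|^{-(d+2\alpha)}dk\sim|y|^{-2\alpha}$ combined with the prefactor $a^{2\alpha}$ gives $(a/|y|)^{2\alpha}$, which is not $O(a^{2\alpha-\gamma})$ when $|y|\lesssim a$ (recall $\gamma<2\alpha$). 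The paper treats this regime separately (its region $|x|\leq M$, via boundedness of the auxiliary integral $J$ on compact sets). The fix is easy with tools you already use: for $|ax|\leq 1$ split only at $|k|=a$ and $|k|=1$, use the global Lipschitz bound $|g(y)-g(y+k)|\leq C|k|$ together with the H\"older bound on the $\chi$-increment for $|k|\leq a$, the Lipschitz bound on $g$ and $\|\chi\|_{\infty}$ for $a<|k|\leq 1$, and $\|g\|_{\infty},\|\chi\|_{\infty}$ for $|k|>1$; this yields $Ca^{\min(1,2\alpha)}\leq Ca^{2\alpha-\gamma}\leq Ca^{2\alpha-\gamma}g(ax)$ since $2\alpha-\gamma\leq\min(1,2\alpha)$ and $g(ax)\geq\frac12$ there. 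With that addition (and a word, as in the paper, on why the principal value in (i) is finite and continuous on $\{|y|\leq1\}$, where $g$ is only $C^{1,2\alpha}$ when $d=1$, $\alpha<\frac12$), your proof is complete.
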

The proof is given in Appendix A. Note that we will not use the assumption $b\leq \beta$ in the proof of Lemma \ref{lemme_utile} (but only $\beta \leq B$). The assumption $b \leq \beta$ is necessary to Proposition \ref{heat_kernel} and also to ensure the existence and the positiveness of $\phi_1$.  

\begin{remark}
If we want to reach the same level of generality as in \cite{Souganidis_Tarfulea}, we just have to adapt the previous Lemma to an operator $L^\alpha$ with a kernel $\beta$ of the form $\beta(x,y)$ where $\beta$ is a $1$-periodic with respect to $x$, smooth function from $ \mathbb{R}^d \times \mathbb{R}^d$ such that for all $(x,y) \in \mathbb{R}^d \times \mathbb{R}^{d}$
\[\beta(x,y)=\beta(x,-y) \ \text{ and } \ 0 < b \leq \beta(x,y) \leq B ,\]
with $b$ and $B$ positive constants. The interested reader can verify that the proof of
Lemma 1 is robust enough and can easily be adapted to such kernels. 
\end{remark}

\section{The proof of Theorem \ref{theorem_principal_2}}
In this section we will provide the proof of Theorem \ref{theorem_principal_2}. Let us rewrite \eqref{equation_principale_1} with respect to the rescaling given by \eqref{rescaling}
\begin{equation}
\label{equation intermediaire}
\varepsilon \partial_t (\ n( |x|^{\frac{1}{\varepsilon}}\frac{x}{|x|},\frac{t}{\varepsilon})\ ) = -L^{\alpha} (n) ( |x|^{\frac{1}{\varepsilon}} \frac{x}{|x|},\frac{t}{\varepsilon}) + n(|x|^{\frac{1}{\varepsilon}}\frac{x}{|x|},\frac{t}{\varepsilon})[\mu (|x|^{\frac{1}{\varepsilon}} \frac{x}{|x|}) - n( |x|^{\frac{1}{\varepsilon}} \frac{x}{|x|},\frac{t}{\varepsilon})]. 
\end{equation}
\begin{notation} 
For any function $v:\mathbb{R}^d \times \mathbb{R}^+ \rightarrow \mathbb{R}$ and $w: \mathbb{R}^d \rightarrow \mathbb{R}$ we denote by $v_\varepsilon$ and $w_\varepsilon$ the rescaled functions given by :
\[v_{\varepsilon}(x,t):=v( |x|^{\frac{1}{\varepsilon}} \frac{x}{|x|},\frac{t}{\varepsilon})\ \text{ and } \ w_\varepsilon (x)= w (|x|^{\frac{1}{\varepsilon}-1}x).\]
\end{notation}
One can write the first term in the right hand side of \eqref{equation intermediaire} in term of $n_\varepsilon$ in the following way. 
\begin{align*}
&L^{\alpha} (n) (|x|^{\frac{1}{\varepsilon}-1} x,\frac{t}{\varepsilon}) = - PV \int_{\mathbb{R}^d} \frac{n ( |x|^{\frac{1}{\varepsilon}-1} x + h,\frac{t}{\varepsilon} )-n(|x|^{\frac{1}{\varepsilon}-1}x,\frac{t}{\varepsilon})}{|h|^{2\alpha +d}}\times \beta(|x|^{\frac{1}{\varepsilon}-1}x,\frac{h}{|h|})dh \\
&= - PV \int_{\mathbb{R}^d}\left( n_{\varepsilon} \left( \left||x|^{\frac{1}{\varepsilon}-1} x + h\right|^{\varepsilon}  \frac{(|x|^{\frac{1}{\varepsilon}-1} x + h)}{ ||x|^{\frac{1}{\varepsilon}-1} x + h|},t\right)-n_{\varepsilon}(x,t)\right)\frac{\beta_\varepsilon(x,\frac{h}{|h|})dh}{|h|^{2\alpha +d}}.
\end{align*}
We can hence define:
\[L^{\alpha}_{\varepsilon}(n_{\varepsilon})(x,t):=  L^{\alpha} (n) ( |x|^{\frac{1}{\varepsilon}} \frac{x}{|x|},\frac{t}{\varepsilon}),\]
which allows us to write \eqref{equation intermediaire} as below: 
\begin{equation}
\label{equation_principale_2}
\varepsilon \partial_t  n_{\varepsilon}( x,t) = -L^{\alpha}_{\varepsilon} n_{\varepsilon} (x,t) + n_{\varepsilon}(x,t)[\mu_{\varepsilon} (x) - n_{\varepsilon}(x,t)]. 
\end{equation}
In the same way we define 
\[\widetilde{K}_\varepsilon(n_\varepsilon, \chi_\varepsilon)(x,t) := \widetilde{K}(n,\chi)(|x|^{\frac{1}{\varepsilon}-1}x, \frac{t}{\varepsilon}).\]
Moreover, according to the inequalities \eqref{inegalité_H1'}, we can consider $n(x,1)$ as our initial data instead of $n(x,0)$. So we can replace the assumption \eqref{H1} by:
\begin{equation}
\label{H1'}\tag{H1'}
\dfrac{c_m}{1+|x|^{d+2\alpha}} \leq n_0(x) \leq \dfrac{c_M}{1+|x|^{d+2\alpha}} \Rightarrow \dfrac{c_m}{1+|x|^{\frac{d+2\alpha}{\varepsilon}}} \leq n_{0,\varepsilon}(x) \leq \dfrac{c_M}{1+|x|^{\frac{d+2\alpha}{\varepsilon}}}.
\end{equation}

In the next subsection we are going to provide sub and super-solutions to \eqref{equation_principale_2} which will allow us to demonstrate Theorem \ref{theorem_principal_2} in a second subsection. 
\subsection{Sub and super-solution to (\ref{equation_principale_2}).}

\begin{theorem}
\label{sub_super_solution_theorem}
We assume \eqref{H2} and \eqref{H3} and we choose positive constants $C_m <\frac{|\lambda_1|}{\max \phi_1}$ and $C_M>\frac{|\lambda_1|}{\min \phi_1}$ and $\delta$ such that
\[0<\delta \leq \min(\sqrt{C_M \min \phi_1 - |\lambda_1|},\sqrt{  |\lambda_1|-C_m \max \phi_1}).\]
Then there exists a positive constant $\varepsilon_0<\delta$ such that for all $\varepsilon \in]0,\varepsilon_0[$ we have: \\
(i) $ f^{M}_{\varepsilon}(t,x)=\phi_{1,\varepsilon}(x) \times \frac{C_M}{1+e^{-\frac{t}{\varepsilon}(|\lambda_1|+\varepsilon^2)-\frac{\delta}{\varepsilon}}|x|^{\frac{d+2\alpha}{\varepsilon}}} \ \text{ is a super-solution of \eqref{equation_principale_2}}$,\\
(ii) $ f^m_{\varepsilon}(x,t)=\phi_{1,\varepsilon}(x) \times \frac{C_m e^{-\frac{\delta}{\varepsilon}}}{1+e^{-\frac{t}{\varepsilon}(|\lambda_1|-\varepsilon^2)-\frac{\delta}{\varepsilon}}|x|^{\frac{d+2\alpha}{\varepsilon}}} \ \text{ is a sub-solution of \eqref{equation_principale_2}}.$\\
(iii) Moreover, if we assume \eqref{H1'} and $C_m<\dfrac{c_m}{\max |\phi_1|}$ and $C_M>\dfrac{c_M}{\min |\phi_1|}$ where $c_m$ and $c_M$ are given by \eqref{H1'} then for all $(x,t) \in \mathbb{R}^d \times [0,+\infty[$,
\begin{equation}
\phi_{1,\varepsilon}(x) \times \frac{C_m e^{\frac{-\delta}{\varepsilon}-\varepsilon t}}{1+e^{-\frac{|\lambda_1|t+\delta}{\varepsilon}}|x|^{\frac{d+2\alpha}{\varepsilon}}}\leq n_{\varepsilon}(x,t) \leq \phi_{1,\varepsilon}(x) \times \frac{C_M e^{\varepsilon t}}{1+e^{-\frac{|\lambda_1|t+\delta}{\varepsilon}}|x|^{\frac{d+2\alpha}{\varepsilon}}}.
\label{inequation_2}
\end{equation}
\end{theorem}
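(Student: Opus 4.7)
The plan is to first reduce the verification of the sub- and super-solution inequalities for \eqref{equation_principale_2} to corresponding inequalities in the unscaled variables. Writing $y = |x|^{1/\varepsilon-1}x$ and $s = t/\varepsilon$, one has $\phi_{1,\varepsilon}(x) = \phi_1(y)$, $|x|^{(d+2\alpha)/\varepsilon} = |y|^{d+2\alpha}$, $\varepsilon\partial_t f^M_\varepsilon(x,t) = \partial_s f^M(y,s)$, and by construction $L^\alpha_\varepsilon f^M_\varepsilon(x,t) = L^\alpha f^M(y,s)$; it therefore suffices to verify
\[\partial_s f^M + L^\alpha f^M \geq f^M(\mu - f^M) \quad \text{and} \quad \partial_s f^m + L^\alpha f^m \leq f^m(\mu - f^m)\]
for the profiles $f^M(y,s) = C_M\,\phi_1(y)\, g(a_M(s)y)$ and $f^m(y,s) = C_m e^{-\delta/\varepsilon}\,\phi_1(y)\, g(a_m(s)y)$, where $g(z) = (1+|z|^{d+2\alpha})^{-1}$ and $a_{M/m}(s) = \exp\!\bigl(-\tfrac{s(|\lambda_1|\pm\varepsilon^2)+\delta/\varepsilon}{d+2\alpha}\bigr)$.

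Setting $h(y,s) := g(a_M(s)y)$, a direct differentiation yields $\partial_s h = (|\lambda_1|+\varepsilon^2)\,h(1-h)$. Combining this with the product formula $L^\alpha(\phi_1 h) = \phi_1 L^\alpha h + h L^\alpha\phi_1 - \widetilde{K}(\phi_1,h)$ stated before Lemma \ref{lemme_utile} and the eigenvalue identity $L^\alpha\phi_1 = (\mu-|\lambda_1|)\phi_1$, a rearrangement of terms will give
\[\partial_s f^M + L^\alpha f^M - \mu f^M + (f^M)^2 = \varepsilon^2 f^M + C_M\phi_1 h^2\bigl(C_M\phi_1 - |\lambda_1| - \varepsilon^2\bigr) + C_M\phi_1 L^\alpha h - C_M\widetilde{K}(\phi_1,h).\]
The hypotheses $C_M\min\phi_1 > |\lambda_1|$ and $\delta^2 \leq C_M\min\phi_1 - |\lambda_1|$ ensure that for every $\varepsilon<\delta$ the quadratic bracket is non-negative. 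The parallel computation for $f^m$ produces the opposite inequality with a leading $-\varepsilon^2 f^m$ and a non-positive quadratic bracket, via $C_m\max\phi_1 < |\lambda_1|$ and $\delta^2 \leq |\lambda_1| - C_m\max\phi_1$.

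The main obstacle is controlling the two remainder terms $\phi_1 L^\alpha h$ and $\widetilde{K}(\phi_1,h)$, and this is precisely where Lemma \ref{lemme_utile} is decisive. For every $s \geq 0$ and every $\varepsilon$ small, $a_M(s) \leq \exp\!\bigl(-\delta/[(d+2\alpha)\varepsilon]\bigr) < 1$, so parts (i) and (ii) of the lemma applied with $\chi = \phi_1$ give
\[|L^\alpha h(y,s)| \leq C\, a_M(s)^{2\alpha}\, h(y,s), \qquad |\widetilde{K}(\phi_1,h)(y,s)| \leq C\, a_M(s)^{2\alpha - \gamma}\, h(y,s).\]
Since $a_M(s)^{2\alpha}$ and $a_M(s)^{2\alpha-\gamma}$ are both bounded by $e^{-c/\varepsilon}$ for some $c>0$ independent of $s$, the two remainders are $o(\varepsilon^k)$ for every $k$; choosing $\varepsilon_0$ small enough, they are absorbed into $\tfrac12\varepsilon^2 f^M$, closing the super-solution inequality, and the sub-solution closes identically.

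Finally for (iii), I check the initial ordering. Using $e^{-\delta/\varepsilon} \leq 1$ and $\min\phi_1 \leq \phi_{1,\varepsilon} \leq \max\phi_1$, together with the algebraic identity
\[\frac{C_m e^{-\delta/\varepsilon}}{1+e^{-\delta/\varepsilon}|x|^{(d+2\alpha)/\varepsilon}} = \frac{C_m}{e^{\delta/\varepsilon}+|x|^{(d+2\alpha)/\varepsilon}} \leq \frac{C_m}{1+|x|^{(d+2\alpha)/\varepsilon}},\]
the hypotheses $C_m\max\phi_1 < c_m$ and $C_M\min\phi_1 > c_M$ combined with \eqref{H1'} yield $f^m_\varepsilon(\cdot,0) \leq n_{0,\varepsilon} \leq f^M_\varepsilon(\cdot,0)$. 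The comparison principle for \eqref{equation_principale_2} then gives $f^m_\varepsilon \leq n_\varepsilon \leq f^M_\varepsilon$ on $\mathbb{R}^d \times [0,\infty)$. A last elementary manipulation based on $e^{-\frac{t}{\varepsilon}(|\lambda_1|\pm\varepsilon^2) - \delta/\varepsilon} = e^{\mp\varepsilon t}\, e^{-(|\lambda_1|t+\delta)/\varepsilon}$ and the inequalities $\tfrac{1}{1+ae^{-\varepsilon t}} \leq \tfrac{e^{\varepsilon t}}{1+a}$, $\tfrac{1}{1+ae^{\varepsilon t}} \geq \tfrac{e^{-\varepsilon t}}{1+a}$ (valid for $a \geq 0$, $t\geq 0$) rewrites these bounds in the exact form \eqref{inequation_2}.
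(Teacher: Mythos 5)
Your proposal is correct and follows essentially the same route as the paper: the product formula with $\widetilde{K}$, the eigenvalue relation for $\phi_1$, Lemma \ref{lemme_utile} to absorb the exponentially small terms $\phi_1 L^\alpha h$ and $\widetilde{K}(\phi_1,h)$ into $\varepsilon^2 f^{M/m}$, and the comparison principle plus the elementary algebraic rewriting for (iii). The only (harmless) cosmetic difference is that you work in the unscaled variables and use the exact identity $\partial_s h=(|\lambda_1|\pm\varepsilon^2)h(1-h)$, where the paper bounds $\partial_t\psi_\varepsilon$ from below by an inequality involving $C_M\min\phi_1$.
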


\begin{proof}
Since the proofs of $(i)$ and $(ii)$ follow from similar arguments, we will only provide the proof of $(i)$ and $(iii)$.\\
\textit{\textbf{Proof of (i).}} We define:
\begin{equation}\label{definition_psi}
\psi(x,t):= \frac{C_M}{1+e^{-t(|\lambda_1|+\varepsilon^2)-\frac{\delta}{\varepsilon}}|x|^{d+2\alpha}}.
\end{equation}
Then, noticing that $\phi_1$ is independent of $t$, we first bound $\partial_t \psi_{\varepsilon}$ from below,
\begin{equation}
\label{derivee_temporelle}
\begin{aligned}
\partial_t  \psi_{\varepsilon} (x,t) &= \frac{C_M\frac{(|\lambda_1|+\varepsilon^2)}{\varepsilon}e^{-t\frac{(|\lambda_1|+\varepsilon^2)}{\varepsilon}-\frac{\delta}{\varepsilon}}|x|^{\frac{d+2\alpha}{\varepsilon}}}{(1+e^{-t\frac{(|\lambda_1|+\varepsilon^2)}{\varepsilon}-\frac{\delta}{\varepsilon}}|x|^{\frac{d+2\alpha}{\varepsilon}})^2} \\
&= \frac{  \psi_{\varepsilon}(x,t) }{\varepsilon} [(|\lambda_1|+\varepsilon^2) \frac{e^{-t\frac{(|\lambda_1|+\varepsilon^2)}{\varepsilon}-\frac{\delta}{\varepsilon}}|x|^{\frac{d+2\alpha}{\varepsilon}}}{1+e^{-t\frac{|\lambda_1|+\varepsilon^2)}{\varepsilon}-\frac{\delta}{\varepsilon}}|x|^{\frac{d+2\alpha}{\varepsilon}}} ] \\
&\geq  \frac{  \psi_{\varepsilon}(x,t) }{\varepsilon} [|\lambda_1|+\varepsilon^2-  \psi_{\varepsilon}(x,t) \phi_{1,\varepsilon}(x) ].
\end{aligned}
\end{equation}
The last inequality is obtained from the definition of $C_M$ and $\varepsilon$. Actually, for such $C_M$ and $\varepsilon$, we have, for any positive non-null constant A, the following relation: 
\[\frac{A(|\lambda_1|+\varepsilon^2)}{1+A} \geq |\lambda_1|+\varepsilon^2 - \frac{C_M \min \phi_1}{1+A},\]
because,
\[
\begin{aligned}
 |\lambda_1|+\varepsilon^2 - \frac{C_M\min \phi_1}{1+A} & = \frac{(1+A)( |\lambda_1|+\varepsilon^2) - C_M\min \phi_1}{1+A} \\
																							& = \frac{A( |\lambda_1|+\varepsilon^2) - (C_M \min \phi_1 -|\lambda_1|-\varepsilon^2)}{1+A} \\
																							& \leq \frac{A( |\lambda_1|+\varepsilon^2) }{1+A}.
\end{aligned}
\]
We also compute $L^{\alpha}_{\varepsilon} (f^{M}_{\varepsilon}) (x,t)$ as a fractional Laplacian of a product of functions,  
\begin{equation}
\label{equation_produit}
L^{\alpha}_{\varepsilon} (f^{M}_{\varepsilon}) (x,t) =  \phi_{1,\varepsilon}(x) L^{\alpha}_{\varepsilon} \psi_{\varepsilon} (x,t) +  \psi_{\varepsilon}(x,t) L^{\alpha}_{\varepsilon} \phi_{1,\varepsilon} (x) - \widetilde{K}_{\varepsilon}(\psi_\varepsilon, \phi_{1,\varepsilon}) (x,t)
\end{equation}
with $\widetilde{K}$ given in section 2. Replacing this in equation \eqref{equation_principale_2} and using the two previous results \eqref{derivee_temporelle} and \eqref{equation_produit}, we find: 
\begin{equation*}
\begin{aligned}
&\varepsilon \partial_t f^M_{\varepsilon}(x,t) +L^{\alpha}_{\varepsilon} f^M_{\varepsilon} (x,t) -f^M_{\varepsilon}(x,t)[\mu_{\varepsilon} (x) - f^M_{\varepsilon}(x,t)] \\
&\geq f^M_{\varepsilon}(x,t) (|\lambda_1| + \varepsilon^2 - f^M_{\varepsilon}(x,t))+\phi_{1,\varepsilon}(x)L^{\alpha}_{\varepsilon} \psi_{\varepsilon} (x,t) +  \psi_{\varepsilon}(x,t)L^{\alpha}_{\varepsilon} \phi_{1,\varepsilon} (x) \\
&- \widetilde{K}_{\varepsilon}(\psi_\varepsilon, \phi_{1,\varepsilon})(x,t)- \mu_{\varepsilon}(x)  f^M_{\varepsilon}(x,t) +  f^M_{\varepsilon}(x,t)^2\\
&= \varepsilon^2 f^M_{\varepsilon}(x,t) + \phi_{1,\varepsilon}(x) L^{\alpha}_{\varepsilon} \psi_{\varepsilon} (x,t) - \widetilde{K}_{\varepsilon}(\psi_\varepsilon, \phi_{1,\varepsilon}) (x,t),
\end{aligned}
\end{equation*}
where we have used \eqref{vecteur_propre} and \eqref{H3} for the last equality.\\
In order to control $ L^{\alpha}_{\varepsilon} \psi_{\varepsilon} (x,t)$ and $\widetilde{K}_{\varepsilon}(\psi_\varepsilon, \phi_{1,\varepsilon})(x,t)$, we are going to use Lemma \ref{lemme_utile}. For, $L^{\alpha}_{\varepsilon} \psi_{\varepsilon} (x,t)$, noticing that $\psi_\varepsilon(x,t)= C_M g(e^{\frac{-t(|\lambda_1|+\varepsilon^2)-\delta}{\varepsilon(1+2\alpha)}}|x|^{\frac{1}{\varepsilon}-1}x)$, and thanks to the point $(i)$ of Lemma \ref{lemme_utile}, we obtain: 
\[- C  e^{-2\alpha \frac{t(|\lambda_1|+\varepsilon^2)+\delta}{\varepsilon}}\psi_{\varepsilon}(t,x) \leq L^{\alpha}_{\varepsilon}\psi_{\varepsilon}(t,x) .\]
But, comparing the growths, there exists $\varepsilon_1>0$ such that for $\varepsilon <\varepsilon_1$ and for all $t\geq 0$ :
\[C_M \times C e^{-2\alpha\frac{t(|\lambda_1|+\varepsilon^2)+\delta}{\varepsilon(d+2\alpha)}}-\frac{\varepsilon^2}{3} \leq 0,\]
hence: 
\[- \frac{\varepsilon^2}{3}\psi_{\varepsilon}(x,t ) \leq L^{\alpha}_{\varepsilon}\psi_\varepsilon (x,t) .\]
Now we deal with $\widetilde{K}_{\varepsilon}(\psi_\varepsilon, \phi_{1,\varepsilon}) (x,t)$ in a similar fashion. Thanks to Lemma \ref{lemme_utile} $(ii)$, we find: 
\begin{align*}
\widetilde{K}_{\varepsilon}(\psi_\varepsilon, \phi_{1,\varepsilon})(x,t) &=\widetilde{K}(\psi,\phi_1)(|x|^{\frac{1}{\varepsilon}}\frac{x}{|x|},\frac{t}{\varepsilon})\\
&  \leq  C  e^{- \frac{(2\alpha-\gamma)[t(|\lambda_1|+\varepsilon^2)+\delta]}{\varepsilon}}\psi(|x|^{\frac{1}{\varepsilon}}\frac{x}{|x|},\frac{t}{\varepsilon}) \\
&= C  e^{- \frac{(2\alpha-\gamma)[t(|\lambda_1|+\varepsilon^2)+\delta]}{\varepsilon}}\psi_{\varepsilon}(x,t).
\end{align*}
Then, noticing  that for any choice of $\alpha$, $2\alpha - \gamma$ is strictly positive, we deduce there exists $\varepsilon_2>0$ such that for all $\varepsilon < \varepsilon_2$:
\[C e^{-(2\alpha-\gamma) \frac{t(|\lambda_1|+\varepsilon^2)+\delta}{\varepsilon}}-\frac{\varepsilon^2 \min{\phi_1} }{3} \leq 0.\]
We deduce that 
\[\widetilde{K}_{\varepsilon}(\psi_\varepsilon, \phi_{1,\varepsilon})(x,t)  \leq  \frac{\varepsilon^2}{3}\psi_{\varepsilon}(x,t) \min \phi_1 \leq  \frac{\varepsilon^2}{3}\psi_{\varepsilon}(x,t)\phi_{1,\varepsilon}(x).\]
We set :
\[\varepsilon_0= \min (\varepsilon_1, \varepsilon_2).\]
Then,  we conclude that for $\varepsilon \leq \varepsilon_0$, we have : 
\begin{equation*}
\begin{aligned}
&\varepsilon \partial_t  f^M_{\varepsilon}(x,t)+L^{\alpha}_{\varepsilon} f^M_{\varepsilon} (x,t) -f^M_{\varepsilon}(x,t)[\mu_{\varepsilon} (x) - f^M_{\varepsilon}(x,t)] \\
&\geq \varepsilon^2 f^M_{\varepsilon}(x,t) + \phi_{1,\varepsilon}(x) L^{\alpha}_{\varepsilon} \psi_{\varepsilon} (x,t) - \widetilde{K}_{\varepsilon}(\psi,\phi_1) (x,t)\\
 &\geq \varepsilon^2 f^M_{\varepsilon}(x,t) -\frac{\varepsilon^2}{3} \phi_{1,\varepsilon}( x)\psi_{\varepsilon} (x,t) - \frac{\varepsilon^2}{3} \phi_{1,\varepsilon}(x) \psi_{\varepsilon}(x,t) \\
& \geq \frac{\varepsilon^2}{3} f^M_{\varepsilon}(x,t)\\
& \geq 0.
\end{aligned}
\end{equation*}
Therefore $f^M_{\varepsilon}$ is a super-solution of \eqref{equation_principale_2} and this concludes the proof of the point $(i)$.
\bigbreak
\textit{\textbf{Proof of (iii).}} From \eqref{H1'}, since $\max | \phi_1|C_m<c_m$ and $c_M<C_M \min|\phi_1|$, we have:
\[f_\varepsilon^m(x,0)= \dfrac{\phi_{1,\varepsilon}(x) \times C_m e^{-\frac{\delta}{\varepsilon}}}{1+e^{-\frac{\delta}{\varepsilon}}|x|^{\frac{d+2\alpha}{\varepsilon}}} = \dfrac{\phi_{1,\varepsilon}(x) \times C_m }{e^{\frac{\delta}{\varepsilon}}+|x|^{\frac{d+2\alpha}{\varepsilon}}} \leq \dfrac{c_m }{1+|x|^{\frac{d+2\alpha}{\varepsilon}}} \leq n_\varepsilon(x,0) \leq f_\varepsilon^M(x,t).\] 
Then, according to the comparison principle,  we obtain:
\[\phi_{1,\varepsilon}(x) \times \frac{C_m e^{-\frac{\delta}{\varepsilon}}}{1+e^{-\frac{t}{\varepsilon}(|\lambda_1|-\varepsilon^2)-\frac{\delta}{\varepsilon}}|x|^{\frac{d+2\alpha}{\varepsilon}}} \leq n_{\varepsilon}(x,t) \leq  \phi_{1,\varepsilon}(x) \times \frac{C_M}{1+e^{-\frac{t}{\varepsilon}(|\lambda_1|+\varepsilon^2)-\frac{\delta}{\varepsilon}}|x|^{\frac{d+2\alpha}{\varepsilon}}},\]
and hence
\begin{equation}
\phi_{1,\varepsilon}(x) \times \frac{C_m e^{\frac{-\delta}{\varepsilon}-\varepsilon t}}{1+e^{-\frac{|\lambda_1|t+\delta}{\varepsilon}}|x|^{\frac{d+2\alpha}{\varepsilon}}}\leq n_{\varepsilon}(x,t) \leq \phi_{1,\varepsilon}(x) \times \frac{C_M e^{\varepsilon t}}{1+e^{-\frac{|\lambda_1|t+\delta}{\varepsilon}}|x|^{\frac{d+2\alpha}{\varepsilon}}}.
\tag{\ref{inequation_2}}
\end{equation}

\end{proof}

\subsection{Convergence to the stationary state}

Thanks to the inequalities \eqref{inequation_2}, we can now prove Theorem \ref{theorem_principal_2}. To prove this theorem, we are going to follow the ideas of Méléard and Mirrahimi in \cite{Mirrahimi1}. 

\begin{proof}
First, we perform a Hopf-Cole transformation 
\begin{equation}
\label{HC} 
u_{\varepsilon} (x,t) : = \varepsilon \log n_{\varepsilon}(x,t) \ \text{ and } \ u_{+,\varepsilon} (x) : = \varepsilon \log n_{+,\varepsilon}(x).
\end{equation}
Taking the logarithm in \eqref{inequation_2} and multiplying by $\varepsilon$, we find :  
\begin{align*}
&-\varepsilon^2 t + \varepsilon \log C_m \phi_{1,\varepsilon} - \varepsilon \log (1+e^{-\frac{|\lambda_1|t+\delta}{\varepsilon}}|x|^{\frac{d+2\alpha}{\varepsilon}})-\delta\leq u_{\varepsilon} (x,t)\\
 & \text{ and } u_\varepsilon(x,t) \leq \varepsilon^2 t + \varepsilon \log C_M \phi_{1,\varepsilon} - \varepsilon \log (1+e^{-\frac{|\lambda_1|t+\delta}{\varepsilon}}|x|^{\frac{d+2\alpha}{\varepsilon}}).
\end{align*}
Define 
\[\underline{u}(x,t)=\underset{\varepsilon \rightarrow 0}{\liminf}\  u_{\varepsilon}(x,t), \ \ \overline{u}(x,t)=\underset{\varepsilon \rightarrow 0}{\limsup}\  u_{\varepsilon}(x,t), \ \ \ \text{ for all } (x,t) \in \mathbb{R}^d\times(0,+\infty).\]
Letting $\varepsilon \rightarrow 0$, we obtain 
\[\min(0,|\lambda_1|\ t + \delta  - (d+2\alpha)\log |x|)-\delta \leq \underline{u}(x,t) \leq \overline{u}(x,t) \leq \min(0,|\lambda_1|\ t + \delta - (d+2\alpha)\log |x|).\]
We then let $\delta \rightarrow 0$ and we obtain 
\[u(x,t):=\underline{u}(x,t)=\overline{u}(x,t) =  \min(0,|\lambda_1|\ t - (d+2\alpha)\log |x|).\]
We deduce that $u_\varepsilon$ converges locally uniformly in $\mathbb{R}^d\times [0,+\infty[$ to $u$ since the above limits are locally uniform in $\varepsilon$. 

\textit{\textbf{Proof of (i).}} For any compact set $K$ in $\mathcal{A}$, there exists a positive constant $a$ such that for all $(x_0,t_0) \in K$, we have $u(x_0,t_0) < -a$. It is thus immediate from \eqref{HC} that $n_{\varepsilon}$ converges uniformly to $0$ in $K\subset\mathcal{A}$. This concludes the proof of $(i)$.
\bigbreak
\textit{\textbf{Proof of (ii).}} We divide \eqref{equation_principale_2} by $n_\varepsilon$ and we obtain
\[ \partial_t u_\varepsilon + L^{\alpha}_\varepsilon n_\varepsilon n_\varepsilon^{-1} = \mu_\varepsilon -n_\varepsilon, \]
that we rewrite as below, 
\begin{equation}\label{nouvelle_equation_n+}
n_\varepsilon=n_{+,\varepsilon} +(- \partial_t u_\varepsilon- L^{\alpha}_\varepsilon n_\varepsilon n_\varepsilon^{-1} + \mu_\varepsilon -n_{+,\varepsilon}).
\end{equation}
\bigbreak
\textbf{Step 1: }\textit{$\dfrac{n_\varepsilon (x_0,t_0)}{n_{+,\varepsilon}(x_0)} \geq 1 +o(1)$ in every compact set of $\mathcal{B}$.}

Let $K$ be a compact set of $\mathcal{B}$ and $(x_0,t_0) \in K$. We choose $\nu$ a positive constant small enough such that for all $(y,s) \in K$, 
\begin{equation}\label{condition_K}
(d+2\alpha) \log|y| < |\lambda_1|s-2\nu \ \text{ and } \ \ 2\nu<|\lambda_1| s.
\end{equation}
First, we define 
\[\underline{\varphi}(x,t):=\min (0, -(d+2\alpha)\log|x|+|\lambda_1|t_0-\nu)-(t-t_0)^2.\]
It is easy to verify that $u-\underline{\varphi}$ achieves a local strict in $t$ and a global in $x$ minimum at $(x_0,t_0)$. 
Then, we define 
\[\underline{\varphi}_\varepsilon(x,t):=-\varepsilon \log (1+e^{-\frac{|\lambda_1|t_0-\nu}{\varepsilon}}|x|^{\frac{d+2\alpha}{\varepsilon}})-(t-t_0)^2.\]
Thus, $(\underline{\varphi}_{\varepsilon})_{\varepsilon}$ converges locally uniformly to $\underline{\varphi}$. Moreover, since $n_{+}$ is periodic and strictly positive, we have that $u_{+,\varepsilon}$ converges to $0$, hence $u_{\varepsilon}-(\underline{\varphi}_{\varepsilon}+ u_{+,\varepsilon}) \underset{\varepsilon \rightarrow 0}{\longrightarrow} u-\underline{\varphi}$ locally uniformly.
Thus, there exists $(x_{\varepsilon},t_{\varepsilon}) \in \mathbb{R}^d \times [0,+\infty[$ such that $(x_{\varepsilon},t_\varepsilon)$ is a minimum point (local in $t$ and global in $x$) of $(u_{\varepsilon}-\underline{\varphi}_\varepsilon -  u_{+,\varepsilon})$ and $(u_{\varepsilon}-\underline{\varphi}_\varepsilon -  u_{+,\varepsilon})(x_\varepsilon,t_\varepsilon) \rightarrow 0$. Since $(x_0,t_0)$ is a strict in $t$ local minimum of $u-\underline{\varphi}$, one can choose $t_\varepsilon$ such that $t_\varepsilon \rightarrow t_0$. We deduce that
\begin{equation}  \label{derivee_en_temps_o}
\partial_t u_\varepsilon(x_\varepsilon, t_\varepsilon) = \partial_t \underline{\varphi}_\varepsilon (x_\varepsilon,t_\varepsilon) = -2(t_\varepsilon-t_0)=o(1).
\end{equation}

One should ensure that $(x_\varepsilon)_{\varepsilon \rightarrow 0}$ have all their accumulation points in $\overline{B}(0,e^{\frac{|\lambda_1|t_0-\nu}{d+2\alpha}})$ as $\varepsilon$ tends to $0$. This is the case because, at time $t=t_0$, in $\overline{B}(0, e^{\frac{|\lambda_1|t_0-\nu}{d+2\alpha}})$, $u_\varepsilon - \underline{\varphi}_\varepsilon - u_{+,\varepsilon}$ tends to $0$, whereas in $\overline{B}(0, e^{\frac{|\lambda_1|t_0-\nu}{d+2\alpha}})^c$, $u_\varepsilon - \underline{\varphi}_\varepsilon - u_{+,\varepsilon}$ tends to a strictly positive function. \\
We deduce that there exists $\varepsilon_1>0$ such that for all $\varepsilon<\varepsilon_1$ we have $x_\varepsilon\in \overline{B}(0,e^{\frac{|\lambda_1|t_0-\frac{\nu}{2}}{d+2\alpha}})$. 
\\
Then we continue by proving $(- L^{\alpha}_\varepsilon (n_\varepsilon )n_\varepsilon^{-1} + \mu_\varepsilon -n_{+,\varepsilon})(x_\varepsilon,t_\varepsilon)\geq o(1)$, 
\[- L^{\alpha}_\varepsilon (n_\varepsilon) n_\varepsilon^{-1}(x_\varepsilon, t_\varepsilon) =\int_{\mathbb{R}^d} (e^{\frac{u_\varepsilon \left( \left||x_\varepsilon|^{\frac{1}{\varepsilon}-1}x_\varepsilon+h \right|^{\varepsilon-1}(|x_\varepsilon|^{\frac{1}{\varepsilon}-1}x_\varepsilon+h), t_\varepsilon \right)-u_\varepsilon(x_\varepsilon,t_\varepsilon)}{\varepsilon}}-1) \frac{\beta_\varepsilon(x,\frac{h}{|h|})dh}{|h|^{d+2\alpha}}.
\]

From the definition of $(x_\varepsilon,t_\varepsilon)$, we have for all $y \in \mathbb{R}^d$ :
\[ (u_\varepsilon-\underline{\varphi}_\varepsilon- u_{+,\varepsilon})(x_\varepsilon,t_\varepsilon)\leq (u_\varepsilon-\underline{\varphi}_\varepsilon- u_{+,\varepsilon})(y,t_\varepsilon) ,\]
and thus 
\[ (\underline{\varphi}_\varepsilon+ u_{+,\varepsilon})(y,t_\varepsilon)-(\underline{\varphi}_\varepsilon+ u_{+,\varepsilon})(x_\varepsilon,t_\varepsilon) \leq u_\varepsilon(y,t_\varepsilon)-u_\varepsilon(x_\varepsilon,t_\varepsilon).\]
Therefore, from \eqref{HC} we have
\[ -L^{\alpha}_{\varepsilon}(e^{\frac{\underline{\varphi}_\varepsilon}{\varepsilon}}n_{+,\varepsilon})(e^{\frac{\underline{\varphi}_\varepsilon}{\varepsilon}}n_{+,\varepsilon})^{-1}(x_\varepsilon,t_\varepsilon)\leq -L^{\alpha}_\varepsilon (n_\varepsilon )n_\varepsilon^{-1}(x_\varepsilon, t_\varepsilon).\]
Finally, using that $n_{+,\varepsilon}$ is solution of the stationary equation, we obtain 
\[\begin{aligned}
(- L^{\alpha}_\varepsilon (n_\varepsilon) n_\varepsilon^{-1} + \mu_\varepsilon -n_{+,\varepsilon})(x_\varepsilon,t_\varepsilon) &\geq (-L^{\alpha}_{\varepsilon}(e^{\frac{\underline{\varphi}_\varepsilon}{\varepsilon}}n_{+,\varepsilon}))(e^{\frac{\underline{\varphi}_\varepsilon}{\varepsilon}}n_{+,\varepsilon})^{-1}+\mu_\varepsilon -n_{+,\varepsilon})(x_\varepsilon,t_\varepsilon)\\
&=  (-L^{\alpha}_{\varepsilon}(e^{\frac{\underline{\varphi}_\varepsilon}{\varepsilon}})(e^{-\frac{\underline{\varphi}_\varepsilon}{\varepsilon}})-L^{\alpha}_{\varepsilon}(n_{+,\varepsilon})(n_{+,\varepsilon})^{-1}\\
&+\widetilde{K}_\varepsilon (e^{\frac{\underline{\varphi}_\varepsilon}{\varepsilon}},n_{+,\varepsilon})(e^{\frac{\underline{\varphi}_\varepsilon}{\varepsilon}}n_{+,\varepsilon})^{-1}+\mu_\varepsilon -n_{+,\varepsilon} )(x_\varepsilon,t_\varepsilon)\\
&= \left(-L^{\alpha}_{\varepsilon}(e^{\frac{\underline{\varphi}_\varepsilon}{\varepsilon}})(e^{-\frac{\underline{\varphi}_\varepsilon}{\varepsilon}})+\widetilde{K}_\varepsilon (e^{\frac{\underline{\varphi}_\varepsilon}{\varepsilon}},n_{+,\varepsilon})(e^{\frac{\underline{\varphi}_\varepsilon}{\varepsilon}}n_{+,\varepsilon})^{-1}\right)(x_\varepsilon,t_\varepsilon).
\end{aligned}
\]

In order to control the last two terms of the above inequality, we are going to use Lemma \ref{lemme_utile}. Note that, we have the following link between $e^{\frac{\underline{\varphi}_\varepsilon}{\varepsilon}}$ and $g(x)=\frac{1}{1+|x|^{d+2\alpha}}$:
\[e^{\frac{\underline{\varphi}_\varepsilon}{\varepsilon}}(x,t)=\frac{e^{-\frac{(t-t_0)^2}{\varepsilon}}}{1+e^{-\frac{|\lambda_1|t_0-\nu}{\varepsilon}}|x|^{\frac{d+2\alpha}{\varepsilon}}} = e^{\frac{-(t-t_0)^2}{\varepsilon}} g (e^{-\frac{|\lambda_1|t_0-\nu}{(d+2\alpha)\varepsilon}} |x|^{\frac{1}{\varepsilon}-1} x).\]
And so, we can deduce from Lemma \ref{lemme_utile} that: 
\[ o(1)=-C e^{-\frac{2\alpha(|\lambda_1|t_0-\nu)}{(d+2\alpha)\varepsilon}} \leq -L^{\alpha}_{\varepsilon}(e^{\frac{\underline{\varphi}_\varepsilon}{\varepsilon}})(e^{-\frac{\underline{\varphi}_\varepsilon}{\varepsilon}})(x_\varepsilon,t_\varepsilon) ,\]
and, 
\[o(1)=-C \: e^{-\frac{(2\alpha-\gamma)(|\lambda_1|t_0-\nu)}{\varepsilon}} n_{+,\varepsilon}(x_\varepsilon, t_\varepsilon)^{-1} \leq \widetilde{K}_{\varepsilon}(e^{\frac{\underline{\varphi}_\varepsilon}{\varepsilon}},n_{+,\varepsilon})(e^{\frac{\underline{\varphi}_\varepsilon}{\varepsilon}}n_{+,\varepsilon})^{-1}(x_\varepsilon,t_\varepsilon) .\]
We deduce that:
\begin{equation}
o(1) \leq (- L^{\alpha}_\varepsilon (n_\varepsilon) n_\varepsilon^{-1} + \mu_\varepsilon -n_{+,\varepsilon})(x_\varepsilon,t_\varepsilon).
\end{equation}
\\
Finally, combining the above inequality with \eqref{nouvelle_equation_n+} and \eqref{derivee_en_temps_o}, we obtain that
\[1+ o(1) \leq \dfrac{n_\varepsilon(x_\varepsilon, t_\varepsilon)}{n_{+,\varepsilon}( x_\varepsilon)} .\]
Now, we want to bring back this inequality at the point $(x_0,t_0)$. There are two cases: 
\bigbreak
\textbf{Case 1: } $|x_\varepsilon|\geq|x_0|$\\
Because of the definition of $\underline{\varphi}_\varepsilon$, we have :
\[\underline{\varphi}_{\varepsilon}(x_{\varepsilon},t_{\varepsilon}) \leq \underline{\varphi}_\varepsilon (x_0,t_0).\]
Since $(x_\varepsilon, t_\varepsilon)$ is a minimum point of $u_\varepsilon-(\underline{\varphi}_\varepsilon+ u_{\varepsilon,+})$, we deduce that
\[ u_\varepsilon (x_\varepsilon,t_\varepsilon) -  u_{+,\varepsilon} (x_\varepsilon) \leq u_\varepsilon (x_0,t_0) -  u_{+,\varepsilon}(x_0).\]
Thanks to \eqref{HC}, it follows that  
\[1+o(1) \leq \dfrac{n_{\varepsilon}(x_\varepsilon,t_\varepsilon)}{n_{+,\varepsilon}(x_\varepsilon)}  \leq \dfrac{n_{\varepsilon}(x_0,t_0)}{n_{+,\varepsilon}(x_0)} .\]

\bigbreak
\textbf{Case 2: }$ |x_\varepsilon|< |x_0|$\\
In this case, since $(x_0,t_0) \in K$ and thanks to \eqref{condition_K}, we have that
\[-|\lambda_1|t_0+\nu+(d+2\alpha)\log(|x_\varepsilon|) \leq -|\lambda_1|t_0+\nu+(d+2\alpha)\log(|x_0|)\leq -\nu <0,\]
and thus $e^{\frac{-|\lambda_1|t_0+\nu+(d+2\alpha)\log(|x_\varepsilon|) }{\varepsilon}} = o(1)$. We deduce that
\[e^{-\frac{\underline{\varphi}_{\varepsilon}(x_\varepsilon,t_\varepsilon)}{\varepsilon}}= e^{\frac{(t_\varepsilon-t_0)^2}{\varepsilon}}(1+e^{\frac{-|\lambda_1|t_0+\nu+(d+2\alpha)\log(|x_\varepsilon|)}{\varepsilon}}) \geq 1.\] 
Moreover, following similar computations, we obtain that $ e^{\frac{\underline{\varphi}_{\varepsilon}(x_0,t_0)}{\varepsilon}} =1+o(1).$
Hence, from the definition of $(x_\varepsilon, t_\varepsilon)$, we get
\[u_{\varepsilon}(x_\varepsilon,t_\varepsilon) -  u_{+,\varepsilon}(x_\varepsilon)+\underline{\varphi}_{\varepsilon}(x_0,t_0)-\underline{\varphi}_{\varepsilon}(x_\varepsilon,t_\varepsilon) \leq u_{\varepsilon}(x_0,t_0)- u_{+,\varepsilon}(x_0) .\]
Thanks to \eqref{HC}, we obtain that 
\[1+o(1) \leq \frac{n_{\varepsilon}(x_\varepsilon,t_\varepsilon)}{n_{+,\varepsilon}(x_\varepsilon)} \times \dfrac{e^{\frac{\underline{\varphi}_{\varepsilon}(x_0,t_0)}{\varepsilon}}}{e^{\frac{\underline{\varphi}_{\varepsilon}(x_\varepsilon,t_\varepsilon)}{\varepsilon}}}  \leq \dfrac{n_{\varepsilon}(x_0,t_0)}{n_{+,\varepsilon}(x_0)}.\]
So we have proved, in all cases

\[ 1 +o(1) \leq \dfrac{n_\varepsilon (x_0,t_0)}{n_{+,\varepsilon}(x_0)}.\]

\bigbreak
\textbf{Step 2: }\textit{$\dfrac{n_\varepsilon (x_0,t_0)}{n_{+,\varepsilon}(x_0)} \leq 1 +o(1)$ in every compact set of $\mathcal{B}$.}

This step is very similar to the first one. \\
We pick $(x_0,t_0) \in \mathcal{B}$ and let $\nu$ be a positive constant. As before, we define 
\[\overline{\varphi}(x,t):=\min (0,|\lambda_1|t_0+\nu-(d+2\alpha)\log|x|)+(t-t_0)^2.\]

It is easy to verify that $u-\overline{\varphi}$ achieves a local and strict in $t$ and a global in $x$ maximum at $(x_0,t_0)$. 
Then, defining 
\[\overline{\varphi}_\varepsilon(x,t):=-\varepsilon \log (1+e^{-\frac{|\lambda_1|t_0+\nu}{\varepsilon}} |x|^{\frac{d+2\alpha}{\varepsilon}})+(t-t_0)^2 ,\]
we have that $(\overline{\varphi}_{\varepsilon})_{\varepsilon}$ converges locally uniformly to $\overline{\varphi}$. Moreover,we know that $u_{+,\varepsilon}$ tends to $0$ and so $u_{\varepsilon}-(\overline{\varphi}_{\varepsilon}+ u_{+,\varepsilon}) \underset{\varepsilon \rightarrow 0}{\longrightarrow} 0$ uniformly in $\mathcal{B}$.
Thus, there exists $(x_{\varepsilon},t_{\varepsilon}) \in \mathbb{R}^d \times [0,+\infty[$ such that $(x_{\varepsilon},t_\varepsilon)$ is a maximum point, global in $x$ and local in $t$, of $(u_{\varepsilon}-\overline{\varphi}_\varepsilon - \varepsilon u_{+,\varepsilon})$ and 
\begin{equation}\label{futur_contradiction}
(u_{\varepsilon}-\overline{\varphi}_\varepsilon -  u_{+,\varepsilon})(x_\varepsilon,t_\varepsilon) \rightarrow 0.
\end{equation}
Since $(x_0,t_0)$ is a strict in $t$ local maximum of $u-\overline{\varphi}$, one can choose $t_\varepsilon$ such that $t_\varepsilon \rightarrow t_0$. We deduce that
\begin{equation}\label{dérivée_temporelle_step2}
\partial_t u_\varepsilon(x_\varepsilon, t_\varepsilon) = \partial_t \overline{\varphi}_\varepsilon (x_\varepsilon,t_\varepsilon) = 2(t_\varepsilon-t_0)=o(1).
\end{equation}

One should ensure that $(x_\varepsilon)_{\varepsilon \rightarrow 0}$ have all their accumulation points in $\overline{B}(0, e^{\frac{|\lambda_1|t_0+\frac{\nu}{4}}{d+2\alpha}})$. This is the case because for $\varepsilon$ small enough, in $\overline{B}(0, e^{\frac{|\lambda_1|t_0}{d+2\alpha}})$, $u_\varepsilon-\overline{\varphi}_\varepsilon-u_{+,\varepsilon}$ tends to 0 whereas in $\overline{B}(0, e^{\frac{|\lambda_1|t_0+\frac{\nu}{4}}{d+2\alpha}})^c$, $u_\varepsilon-\overline{\varphi}_\varepsilon-u_{+,\varepsilon}$ is lower than a strictly negative function.\\
We deduce that there exists $\varepsilon_2>0$ such that for all $\varepsilon<\varepsilon_2$ we have 
\begin{equation}\label{localisation xvarepsilon}
x_\varepsilon \in \overline{B}(0,e^{\frac{|\lambda_1|t_0+\frac{\nu}{2}}{d+2\alpha}}).
\end{equation} 

Then we continue by showing $(- L^{\alpha}_\varepsilon (n_\varepsilon) n_\varepsilon^{-1} + \mu_\varepsilon -n_{+,\varepsilon})\leq o(1)$. With similar computations as in step 1, we obtain :
\[(- L^{\alpha}_\varepsilon (n_\varepsilon) n_\varepsilon^{-1} + \mu_\varepsilon -n_{+,\varepsilon})(x_\varepsilon,t_\varepsilon) \leq (-L^{\alpha}_{\varepsilon}(e^{\frac{\overline{\varphi}_\varepsilon}{\varepsilon}})(e^{-\frac{\overline{\varphi}_\varepsilon}{\varepsilon}})+\ \widetilde{K}_\varepsilon (e^{\frac{\overline{\varphi}_\varepsilon}{\varepsilon}},n_{+,\varepsilon})(e^{\frac{\overline{\varphi}_\varepsilon}{\varepsilon}}n_{+,\varepsilon})^{-1})(x_\varepsilon,t_\varepsilon).\]
Since
\[e^{\frac{\overline{\varphi}_\varepsilon}{\varepsilon}}(x,t)=\frac{e^{\frac{(t-t_0)^2}{\varepsilon}}}{1+e^{-\frac{|\lambda_1|t_0+\nu}{\varepsilon}}|x|^{\frac{d+2\alpha}{\varepsilon}}} = e^{\frac{(t-t_0)^2}{\varepsilon}} g (e^{-\frac{|\lambda_1|t_0+\nu}{\varepsilon(d+2\alpha)}} |x|^{\frac{1}{\varepsilon}-1}x),\]
we can deduce thanks to Lemma \ref{lemme_utile} that : 
\[(-L^{\alpha}_{\varepsilon}(e^{\frac{\overline{\varphi}_\varepsilon}{\varepsilon}})(e^{-\frac{\overline{\varphi}_\varepsilon}{\varepsilon}})(x_\varepsilon,t_\varepsilon) \leq C e^{-\frac{2\alpha(|\lambda_1|t_0+\nu)}{(d+2\alpha)\varepsilon}}=o(1),\]
and, 
\[\ \widetilde{K}_{\varepsilon}(e^{\frac{\overline{\varphi}_\varepsilon}{\varepsilon}},n_{+,\varepsilon})(e^{\frac{\overline{\varphi}_\varepsilon}{\varepsilon}}n_{+,\varepsilon})^{-1}( x_\varepsilon,t_\varepsilon) \leq C \: e^{-\frac{(2\alpha-\gamma)(|\lambda_1|t_0+\nu)}{\varepsilon(d+2\alpha)}} n_{+,\varepsilon}(x_\varepsilon, t_\varepsilon)^{-1} = o(1).\]
Finally, combining the two previous inequalities and \eqref{dérivée_temporelle_step2} in \eqref{nouvelle_equation_n+} we have obtained
\[\dfrac{n_\varepsilon(x_\varepsilon, t_\varepsilon)}{n_{+,\varepsilon}( x_\varepsilon)}\leq 1+o(1).\]

Then, there are two cases to bring it back to the point $(x_0,t_0)$:

\textbf{Case 1: } $|x_\varepsilon|\leq |x_0|$
By definition of $\overline{\varphi}_{\varepsilon}$, we have:
\[ \overline{\varphi}_\varepsilon (x_0,t_0)\leq \overline{\varphi}_{\varepsilon}(x_{\varepsilon},t_{\varepsilon}).\]
Since $(x_\varepsilon, t_\varepsilon)$ is a maximum point of $u_\varepsilon-(\overline{\varphi}_\varepsilon+ u_{\varepsilon,+})$, we deduce that
\[ u_\varepsilon (x_0,t_0) -  u_{+,\varepsilon}(x_0) \leq u_\varepsilon (x_\varepsilon,t_\varepsilon) -  u_{+,\varepsilon} (x_\varepsilon).\]
Thanks to \eqref{HC}, it follows that,
\[\dfrac{n_{\varepsilon}(x_0,t_0)}{n_{+,\varepsilon}(x_0)} \leq \dfrac{n_{\varepsilon}(x_\varepsilon,t_\varepsilon)}{n_{+,\varepsilon}(x_\varepsilon)} \leq 1+o(1).\]
\bigbreak
\textbf{Case 2: } $|x_\varepsilon|>|x_0|$
Thanks to  \eqref{localisation xvarepsilon}, there exists $\varepsilon_2$ such that for all positive $\varepsilon<\varepsilon_2$ there holds
\[|x_{\varepsilon}| \leq e^{\frac{|\lambda_1|t_0+\frac{\nu}{2}}{d+2\alpha}} \Rightarrow -|\lambda_1|t_0-\nu+(d+2\alpha)\log|x_\varepsilon|<-\dfrac{\nu}{2}.\]
And thus, 
\[e^{-\frac{\overline{\varphi}_{\varepsilon}(x_\varepsilon,t_\varepsilon)}{\varepsilon}}= e^{\frac{-(t_\varepsilon-t_0)^2}{\varepsilon}}(1+e^{\frac{-|\lambda_1|t_0-\nu+(d+2\alpha)\log(|x_\varepsilon|)}{\varepsilon}}) \leq 1+e^{\frac{-\nu}{2\varepsilon}} \leq 1+o(1).\]
Moreover, we know by definition that $ e^{\frac{\overline{\varphi}_{\varepsilon}(x_0,t_0)}{\varepsilon}} =1+o(1).$ \\
Furthermore, by definition of $(x_\varepsilon,t_\varepsilon)$, we have 
\[u_{\varepsilon}(x_0,t_0)- u_{+,\varepsilon}(x_0) \leq u_{\varepsilon}(x_\varepsilon,t_\varepsilon) -  u_{+,\varepsilon}(x_\varepsilon)+\overline{\varphi}_{\varepsilon}(x_0,t_0)-\overline{\varphi}_{\varepsilon}(x_\varepsilon,t_\varepsilon),\]
Combining the above inequalities and thanks to \eqref{HC} we obtain that 
\[\dfrac{n_{\varepsilon}(x_0,t_0)}{n_{+,\varepsilon}(x_0)} \leq \frac{n_{\varepsilon}(x_\varepsilon,t_\varepsilon)}{n_{+,\varepsilon}(x_\varepsilon)} \times \dfrac{e^{\frac{\overline{\varphi}_{\varepsilon}(x_0,t_0)}{\varepsilon}}}{e^{\frac{\overline{\varphi}_{\varepsilon}(x_\varepsilon,t_\varepsilon)}{\varepsilon}}} \leq 1+o(1).\]
So we have proved, in all cases
\[\dfrac{n_\varepsilon (x_0,t_0)}{n_{+,\varepsilon}(x_0)} \leq 1 +o(1).\]
Passing up to the limit, we finally obtain the result of \textit{(ii)}. 
\end{proof}

\section{Generalization to KPP type reaction terms}

We can generalize our result to a model with a reaction term $F(x,s)$ which verifies the Fisher KPP assumptions given by \eqref{H4}.

\begin{example}
Obviously we can take as before 
\[F(x,s)=\mu(x)s-s^2.\]
\end{example}

\begin{example}
We can generalize it to the classical example:
\[F(x,s)=\mu(x)s-\omega(x) s^2.\]
Where $\mu$ is a continuous periodic function, and $\omega$ is a continuous periodic strictly positive function.
\end{example}

Of course, we keep the main idea of the previous proof: the rescaling \eqref{rescaling}. So the equation \eqref{equation_principale_KPP} becomes:
\begin{equation}
\label{equation_principale_2_KPP}
\varepsilon \partial_t  n_{\varepsilon}( x,t) = -L^{\alpha}_{\varepsilon} n_{\varepsilon} (x,t) + F_\varepsilon(x,n_\varepsilon(x,t)). 
\end{equation}
As before, according to the comparison principle, the point \textit{(ii)} and \textit{(iii)} of \eqref{H4} imply
\begin{equation}\label{nouvelle_assertion_F}
\mu(x) n - \underline{c} n^2 \leq F(x,n) \leq \mu(x) n - \overline{C} n^2.
\end{equation}
If we associate this result to \eqref{heat_kernel_1}, one can still obtain that the solution will have algebraic tails at time $t=1$ and hence one can replace the assumption \eqref{H1} by \eqref{H1'}:
\begin{equation}\tag{H1'}
\dfrac{c_m}{1+|x|^{\frac{d+2\alpha}{\varepsilon}}} \leq n_{0,\varepsilon}(x) \leq \dfrac{c_M}{1+|x|^{\frac{d+2\alpha}{\varepsilon}}}.
\end{equation}
Therefore, we still have the same sub and super-solutions:
\begin{theorem}\label{theorem_sous_sur_solution_2}
We assume \eqref{H2}, \eqref{H3} and \eqref{H4} and if we choose $C_m <\frac{|\lambda_1|}{\underline{c}\max \phi_1}$ and $C_M> \frac{|\lambda_1|}{\overline{C}\min \phi_1}$ where $\underline{c}$ and $\overline{C}$ are given by the assumptions $(iii)$ of \eqref{H4} and a positive constant $\delta$ such that
\[0<\delta \leq \min(\sqrt{\overline{C}C_M \min \phi_1 - |\lambda_1|},\sqrt{  |\lambda_1|-\underline{c}C_m \max \phi_1});\]
then there exists a positive constant $\varepsilon_0<\delta$ such that for all $\varepsilon \in]0,\varepsilon_0[$ we have:\\
(i) $ f^{M}_{\varepsilon}(x,t)=\phi_{1,\varepsilon}(x) \times \frac{C_M}{1+e^{-\frac{t}{\varepsilon}(|\lambda_1|+\varepsilon^2)-\frac{\delta}{\varepsilon}}|x|^{\frac{d+2\alpha}{\varepsilon}}} \ \text{ is a super-solution of \eqref{equation_principale_2_KPP}},$\\
(ii) $ f^m_{\varepsilon}(x,t)=\phi_{1,\varepsilon}(x) \times \frac{C_me^{-\frac{\delta}{\varepsilon}}}{1+e^{-\frac{t}{\varepsilon}(|\lambda_1|-\varepsilon^2)-\frac{\delta}{\varepsilon}}|x|^{\frac{d+2\alpha}{\varepsilon}}} \ \text{ is a sub-solution of \eqref{equation_principale_2_KPP}}$.\\
(iii) Moreover, if we assume \eqref{H1'} and $C_m<\dfrac{c_m}{\max |\phi_1|}$ and $C_M>\dfrac{c_M}{\min |\phi_1|}$ with $c_m$ and $c_M$ given by \eqref{H1'} then for all $(x,t) \in \mathbb{R}^d \times [0,+\infty[$,
\begin{equation}
\phi_{1,\varepsilon}(x) \times \frac{C_m e^{\frac{-\delta}{\varepsilon}-\varepsilon t}}{1+e^{-\frac{|\lambda_1|t+\delta}{\varepsilon}}|x|^{\frac{d+2\alpha}{\varepsilon}}}\leq n_{\varepsilon}(x,t) \leq \phi_{1,\varepsilon}(x) \times \frac{C_M e^{\varepsilon t}}{1+e^{-\frac{|\lambda_1|t+\delta}{\varepsilon}}|x|^{\frac{d+2\alpha}{\varepsilon}}}.
\label{inequation_2_KPP}
\end{equation}
\end{theorem}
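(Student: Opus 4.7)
The plan is to reduce the proof to that of Theorem~\ref{sub_super_solution_theorem} by exploiting the elementary sandwich \eqref{nouvelle_assertion_F}. Indeed, any nonnegative function $g$ satisfying
\[\varepsilon\partial_t g + L^{\alpha}_\varepsilon g \geq \mu_\varepsilon(x)\,g - \overline{C}\,g^2\]
is automatically a super-solution of \eqref{equation_principale_2_KPP} since $\mu_\varepsilon g - \overline{C} g^2 \geq F_\varepsilon(x,g)$, and symmetrically any nonnegative $g$ satisfying
\[\varepsilon\partial_t g + L^{\alpha}_\varepsilon g \leq \mu_\varepsilon(x)\,g - \underline{c}\,g^2\]
is a sub-solution. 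It therefore suffices to re-run the computation of Theorem~\ref{sub_super_solution_theorem} with the two structural constants $\overline{C}$ and $\underline{c}$ inserted in front of the quadratic term.

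For point $(i)$, I repeat the proof of Theorem~\ref{sub_super_solution_theorem}$(i)$ verbatim with $\psi$ defined by \eqref{definition_psi}. The only modification lies in the bound \eqref{derivee_temporelle}: the same elementary manipulation now delivers
\[\partial_t\psi_\varepsilon \geq \frac{\psi_\varepsilon}{\varepsilon}\bigl[|\lambda_1| + \varepsilon^2 - \overline{C}\,\psi_\varepsilon\,\phi_{1,\varepsilon}\bigr],\]
and the algebraic step goes through precisely when $\overline{C}\,C_M\min\phi_1 \geq |\lambda_1|+\varepsilon^2$, which is exactly what the hypothesis $\delta \leq \sqrt{\overline{C}\,C_M\min\phi_1-|\lambda_1|}$ combined with $\varepsilon < \delta$ secures. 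Expanding $L^{\alpha}_\varepsilon(f^M_\varepsilon)$ as a product via \eqref{equation_produit}, invoking the eigenfunction relation \eqref{vecteur_propre} on $\phi_{1,\varepsilon}$, and controlling $L^{\alpha}_\varepsilon\psi_\varepsilon$ and $\widetilde{K}_\varepsilon(\psi_\varepsilon,\phi_{1,\varepsilon})$ via Lemma~\ref{lemme_utile} produces the same $O(\varepsilon^2)\psi_\varepsilon\phi_{1,\varepsilon}$ remainders as in the logistic case, leading to
\[\varepsilon\partial_t f^M_\varepsilon + L^{\alpha}_\varepsilon f^M_\varepsilon - \bigl(\mu_\varepsilon f^M_\varepsilon - \overline{C}(f^M_\varepsilon)^2\bigr) \geq \frac{\varepsilon^2}{3}\,f^M_\varepsilon \geq 0,\]
and the reduction above upgrades this to the super-solution property for \eqref{equation_principale_2_KPP}. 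Point $(ii)$ is obtained by the strictly parallel computation; the governing constraint becomes $\underline{c}\,C_m\max\phi_1 \leq |\lambda_1|-\varepsilon^2$, encoded in $\delta \leq \sqrt{|\lambda_1|-\underline{c}\,C_m\max\phi_1}$.

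For $(iii)$, the inequalities $C_m<c_m/\max|\phi_1|$ and $C_M>c_M/\min|\phi_1|$ combined with \eqref{H1'} give the initial ordering $f^m_\varepsilon(\cdot,0) \leq n_\varepsilon(\cdot,0) \leq f^M_\varepsilon(\cdot,0)$ exactly as in Theorem~\ref{sub_super_solution_theorem}$(iii)$, and the comparison principle then propagates this ordering for all $t\geq 0$, yielding \eqref{inequation_2_KPP}. The only delicate point, and the one I view as the main technical obstacle, is ensuring that the comparison principle indeed applies to \eqref{equation_principale_KPP} with the general nonlinearity $F$ rather than the explicit quadratic $\mu n - n^2$; this is guaranteed by \eqref{H4}$(iv)$, which together with $M=\max(\max n_0, \max|\mu|)$ confines the relevant solutions to a bounded strip, and by \eqref{H4}$(iii)$, which makes $F(x,\cdot)$ Lipschitz on that strip, so that the comparison argument of \cite{Cabre-Roquejoffre} adapts without change.
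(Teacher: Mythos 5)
Your proposal is correct and follows essentially the same route as the paper: it uses the sandwich \eqref{nouvelle_assertion_F} to reduce to the quadratic reaction with constants $\overline{C}$ and $\underline{c}$, reruns the computation of Theorem \ref{sub_super_solution_theorem} (modified time-derivative bound, product formula, eigenvalue relation, Lemma \ref{lemme_utile}), and concludes \eqref{inequation_2_KPP} by the comparison principle; your added remark on why the comparison principle applies to the general $F$ is a reasonable supplement to what the paper leaves implicit.
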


\begin{proof}
Here is the main step of the proof of the point $(i)$. As in the proof of Theorem \ref{sub_super_solution_theorem}, we put:
\[f^{M}_{\varepsilon}(t,x)=\phi_{1,\varepsilon}(x) \times \psi_\varepsilon (x,t),\]
where $\psi_\varepsilon$ is given by \eqref{definition_psi}, but with a constant $C_M$ given in Theorem \ref{theorem_sous_sur_solution_2}.
Then, with similar computations as before, we find:
\[\partial_t f^{M}_{\varepsilon}\geq \frac{f^{M}_{\varepsilon}}{\varepsilon}(|\lambda_1|+\varepsilon^2-\overline{C}f^{M}_{\varepsilon}).\]
Therefore, using \eqref{nouvelle_assertion_F} and Lemma \ref{lemme_utile}, we get:
\begin{align*}
&\varepsilon \partial_t f^M_{\varepsilon}(x,t) +L^{\alpha}_{\varepsilon} f^M_{\varepsilon} (x,t) -F_\varepsilon(x,f_\varepsilon(x,t)) \\
 &\geq f^M_{\varepsilon}(x,t) (|\lambda_1| + \varepsilon^2 - \overline{C}f^M_{\varepsilon}(x,t))+\phi_{1,\varepsilon}(x) L^{\alpha}_{\varepsilon} \psi_{\varepsilon} (x,t) +  \psi_{\varepsilon}(x,t) L^{\alpha}_{\varepsilon} \phi_{1,\varepsilon} (x) \\
&- \widetilde{K}_{\varepsilon}(\psi, \phi_1) (x,t)- \mu_{\varepsilon}(x)  f^M_{\varepsilon}(x,t) + \overline{C} f^M_{\varepsilon}(x,t)^2\\
&\geq \varepsilon^2 f^M_{\varepsilon}(x,t) -\frac{\varepsilon^2}{3}  f^M_{\varepsilon}(x,t)- \frac{\varepsilon^2}{3}  f^M_{\varepsilon}(x,t) + \psi_{\varepsilon}(x,t) \left[L^{\alpha}_{\varepsilon} \phi_{1,\varepsilon} (x) - (\lambda_1 + \mu_{\varepsilon}(x))\phi_{1,\varepsilon} (x)\right]\\
&\geq 0.
\end{align*}
Thus, we have demonstrated the point $(i)$. The proof of the point $(ii)$ follows similar arguments. We do not give the proof of the point $(iii)$ because this is similar to the proof of $(iii)$ of Theorem \ref{sub_super_solution_theorem}: the main argument is the comparison principle.
\end{proof}

Thus, we can perform the Hopf-Cole transformation \eqref{HC} and we obtain that $u_\varepsilon$ converges locally uniformly to:
\[  u(x,t)=\min(0,|\lambda_1|\ t - (d+2\alpha)\log |x|).\]
Therefore the part \textit{(i)} of Theorem \ref{theorem_principal_2_KPP} can be proved following similar arguments as in the proof of \textit{(i)} of Theorem \ref{theorem_principal_2}. The proof of $(ii)$ changes a little bit so we are going to provide the demonstration.

\begin{proof}[\textbf{Proof of (ii) of Theorem \ref{theorem_principal_2_KPP}.}] Dividing by $n_\varepsilon$ in \eqref{equation_principale_2}, we obtain
\begin{equation} \label{equation_divise_KPP}
 \partial_t u_\varepsilon + L^{\alpha}_\varepsilon n_\varepsilon n_\varepsilon^{-1} = \dfrac{F_\varepsilon(x,n_\varepsilon)}{n_\varepsilon}.
\end{equation}
\bigbreak
\textbf{Step 1: }\textit{$\dfrac{n_\varepsilon (x_0,t_0)}{n_{+,\varepsilon}(x_0)} \geq 1 +o(1)$ in every compact set of $\mathcal{B}$.}\\
The main difference with the proof of Theorem \ref{theorem_principal_2} is that from \eqref{equation_divise_KPP}, we do not obtain directly $\dfrac{n_\varepsilon(x_\varepsilon,t_\varepsilon)}{n_{+,\varepsilon}(x_\varepsilon) }\geq 1+o(1)$ but we deduce $\dfrac{F_\varepsilon(x_\varepsilon,n_{+,\varepsilon})}{n_{+,\varepsilon}}(x_\varepsilon,t_\varepsilon)-\dfrac{F_\varepsilon(x_\varepsilon,n_\varepsilon)}{n_\varepsilon}(x_\varepsilon,t_\varepsilon)\geq o(1)$.

Let $K$ be a compact set of $\mathcal{B}$ and $(x_0,t_0) \in K$. We choose $\nu$ a positive constant small enough such that for all $(y,s) \in K$, 
\begin{equation}\label{condition_K_KPP}
(d+2\alpha) \log|y| < |\lambda_1|s-2\nu \ \text{ and } \ \ 2\nu<|\lambda_1|s.
\end{equation}
First, we define 
\[\underline{\varphi}(t,x):=\min (0, -(d+2\alpha)\log|x|+|\lambda_1|t_0-\nu)-(t-t_0)^2.\]
It is easy to verify that $u-\underline{\varphi}$ achieves a local strict in $t$ and a global in $x$ minimum at $(x_0,t_0)$. 
Then, we define 
\[\underline{\varphi}_\varepsilon(x,t):=-\varepsilon \log (1+e^{-\frac{|\lambda_1|t_0-\nu}{\varepsilon}}|x|^{\frac{d+2\alpha}{\varepsilon}})-(t-t_0)^2.\]
Thus, $(\underline{\varphi}_{\varepsilon})_{\varepsilon}$ converges locally uniformly to $\underline{\varphi}$. We know that $u_{+,\varepsilon}$ tends to $0$ and so $u_{\varepsilon}-(\underline{\varphi}_{\varepsilon}+ u_{+,\varepsilon}) \underset{\varepsilon \rightarrow 0}{\longrightarrow} u-\underline{\varphi}$ locally uniformly. Thus, there exists $(x_{\varepsilon},t_{\varepsilon}) \in \mathcal{B}$ such that $(x_{\varepsilon},t_\varepsilon)$ is a minimum point of $(u_{\varepsilon}-\underline{\varphi}_\varepsilon -  u_{+,\varepsilon})$ and $(u_{\varepsilon}-\underline{\varphi}_\varepsilon - u_{+,\varepsilon})(x_\varepsilon,t_\varepsilon) \rightarrow 0$. Since $(x_0,t_0)$ is a strict local minimum of $u-\underline{\varphi}$ in $t$, we can choose $t_\varepsilon$ such that $t_\varepsilon \rightarrow t_0$. Then 
\begin{equation}\label{deérivée_temporelle_KPP_Step1}
\partial_t u_\varepsilon(x_\varepsilon, t_\varepsilon) = \partial_t \underline{\varphi}_\varepsilon (x_\varepsilon,t_\varepsilon) = -2(t_\varepsilon-t_0)=o(1).
\end{equation}
With the same reasoning as in the proof of Theorem \ref{theorem_principal_2}, we get that there exists $\varepsilon_1>0$ such that for $\varepsilon<\varepsilon_1$, $x_\varepsilon \in \overline{B}(0, e^{\frac{|\lambda_1|t_0-\frac{\nu}{2}}{d+2\alpha}})$. 

Then we continue by proving 
\[(- L^{\alpha}_\varepsilon (n_\varepsilon) n_\varepsilon^{-1} + \dfrac{F_\varepsilon(x,n_\varepsilon)}{n_\varepsilon})(x_\varepsilon,t_\varepsilon)\geq (\dfrac{F_\varepsilon(x,n_\varepsilon)}{n_\varepsilon}-\dfrac{F(x_\varepsilon, n_{+,\varepsilon})}{n_{+,\varepsilon}})(x_\varepsilon, t_\varepsilon)+ o(1).\]
We know that 
\[- L^{\alpha}_\varepsilon (n_\varepsilon) n_\varepsilon^{-1}(x_\varepsilon, t_\varepsilon) =\int_{\mathbb{R}^d} (e^{\frac{u_\varepsilon \left( \left||x_\varepsilon|^{\frac{1}{\varepsilon}-1}x_\varepsilon+h \right|^{\varepsilon-1}(|x_\varepsilon|^{\frac{1}{\varepsilon}-1}x_\varepsilon+h), t_\varepsilon \right)-u_\varepsilon(x_\varepsilon,t_\varepsilon)}{\varepsilon}}-1) \frac{\beta_\varepsilon(x_\varepsilon, \frac{h}{|h|})dh}{|h|^{d+2\alpha}}.
\]
Note that, from the definition of $(x_\varepsilon,t_\varepsilon)$, we have for all $y \in \mathbb{R}^d$ :
\[(u_\varepsilon-\underline{\varphi}_\varepsilon- u_{+,\varepsilon})(x_\varepsilon,t_\varepsilon)\leq (u_\varepsilon-\underline{\varphi}_\varepsilon- u_{+,\varepsilon})(y,t_\varepsilon) ,\]
and thus by \eqref{HC}
\[-L^{\alpha}_{\varepsilon}(e^{\frac{\underline{\varphi}_\varepsilon}{\varepsilon}}n_{+,\varepsilon})(e^{\frac{\underline{\varphi}_\varepsilon}{\varepsilon}}n_{+,\varepsilon})^{-1}(x_\varepsilon,t_\varepsilon)\leq -L^{\alpha}_\varepsilon (n_\varepsilon) n_\varepsilon^{-1}(x_\varepsilon, t_\varepsilon) .\]
Finally, we obtain
\begin{equation}\label{inegalité_KPP_step1}
\begin{aligned}
(- L^{\alpha}_\varepsilon (n_\varepsilon) n_\varepsilon^{-1} + \dfrac{F_\varepsilon(x,n_\varepsilon)}{n_\varepsilon})(x_\varepsilon,t_\varepsilon) &\geq (-L^{\alpha}_{\varepsilon}(e^{\frac{\underline{\varphi}_\varepsilon}{\varepsilon}}n_{+,\varepsilon}))(e^{\frac{\underline{\varphi}_\varepsilon}{\varepsilon}}n_{+,\varepsilon})^{-1}+\dfrac{F_\varepsilon(x,n_\varepsilon)}{n_\varepsilon})(x_\varepsilon,t_\varepsilon)\\
&\geq  (-L^{\alpha}_{\varepsilon}(e^{\frac{\underline{\varphi}_\varepsilon}{\varepsilon}})(e^{-\frac{\underline{\varphi}_\varepsilon}{\varepsilon}})-\dfrac{F_\varepsilon(x,n_{+,\varepsilon})}{n_{+,\varepsilon}}\\
&+\widetilde{K}_\varepsilon (e^{\frac{\underline{\varphi}_\varepsilon}{\varepsilon}},n_{+,\varepsilon})(e^{\frac{\underline{\varphi}_\varepsilon}{\varepsilon}}n_{+,\varepsilon})^{-1}+\dfrac{F_\varepsilon(x,n_\varepsilon)}{n_\varepsilon})(x_\varepsilon,t_\varepsilon)\\
&\geq (o(1)+\dfrac{F_\varepsilon(x,n_\varepsilon)}{n_\varepsilon}-\dfrac{F_\varepsilon(x,n_{+,\varepsilon})}{n_{+,\varepsilon}})(x_\varepsilon,t_\varepsilon).
\end{aligned}
\end{equation}
We have to note that thanks to Lemma \ref{lemme_utile}, in the last inequality, we have controlled the terms : \[o(1) \leq \widetilde{K}_\varepsilon (e^{\frac{\underline{\varphi}_\varepsilon}{\varepsilon}},n_{+,\varepsilon})(e^{\frac{\underline{\varphi}_\varepsilon}{\varepsilon}}n_{+,\varepsilon})^{-1}(x_\varepsilon,t_\varepsilon)\ \text{ and } \  o(1) \leq -L^{\alpha}_{\varepsilon}(e^{\frac{\underline{\varphi}_\varepsilon}{\varepsilon}})(e^{-\frac{\underline{\varphi}_\varepsilon}{\varepsilon}})(x_\varepsilon,t_\varepsilon).\]
Finally combining \eqref{deérivée_temporelle_KPP_Step1} and \eqref{inegalité_KPP_step1}, we obtain
\begin{equation}\label{inegalité2_KPP_step1}
o(1) \leq \dfrac{F_\varepsilon(x_\varepsilon, n_{+,\varepsilon})}{n_{+,\varepsilon}}(x_\varepsilon, t_\varepsilon)-\dfrac{F_\varepsilon(x_\varepsilon, n_{\varepsilon})}{n_{\varepsilon}}(x_\varepsilon, t_\varepsilon).
\end{equation}

We are going to prove by contradiction that \eqref{inegalité2_KPP_step1} implies $o(1)+n_{+,\varepsilon}(x_\varepsilon) \leq n_\varepsilon (x_\varepsilon, t_\varepsilon)$. Let's suppose that there exists a subsequence $(\varepsilon_k)_{k\in \mathbb{N}}$ and a positive constant $C$ such that 
\[n_{\varepsilon_k}(x_{\varepsilon_k},t_{\varepsilon_k})+C<n_{+,\varepsilon_k}(x_{\varepsilon_k}).\]
Then thanks to the strict monotony of the function $s\mapsto \frac{F(x,s)}{s}$ (assumption $(iv)$ in \eqref{H4}) and the mean value Theorem there exists a sequence $y_k$ such that 
\begin{align*}
o_{\varepsilon_k}(1) &\leq \dfrac{F(x_{\varepsilon_k}, n_{+,\varepsilon_k})}{n_{+,\varepsilon_k}}(x_{\varepsilon_k}, t_{\varepsilon_k})-\dfrac{F(x_{\varepsilon_k}, n_{\varepsilon_k})}{n_{\varepsilon_k}}(x_{\varepsilon_k}, t_{\varepsilon_k})\\
&=\partial_s(\frac{F(x_{\varepsilon_k}, s)}{s})(y_{\varepsilon_k})(n_{+,\varepsilon_k}-n_{\varepsilon_k})(x_{\varepsilon_k},t_{\varepsilon_k})\\
&\leq - \overline{C} \times C.
\end{align*}
This is a contradiction. Therefore, for $\varepsilon$ small enough, 
\[n_{+,\varepsilon}(x_\varepsilon)+o(1) \leq n_\varepsilon (x_\varepsilon,t_\varepsilon)  \Rightarrow 1+o(1) \leq \dfrac{n_\varepsilon(x_\varepsilon,t_\varepsilon)}{n_{+,\varepsilon}(x_\varepsilon)}.\]

To bring back this inequality at the point $(x_0,t_0)$, we use exactly the same arguments as for the proof of Theorem \ref{theorem_principal_2} by considering a disjunction of cases $|x_\varepsilon|<|x_0|$ and $|x_0|<|x_\varepsilon|$. We do not provide the details of this disjunctions of cases since they are the same. \\
So we have proved, in all cases
\[1 +o(1) \leq \dfrac{n_\varepsilon (x_0,t_0)}{n_{+,\varepsilon}(x_0)}.\]

The second step can also be proved following similar arguments as in the previous step, thus we do not provide the demonstration.
\end{proof}

\appendix
\section{The proof of Lemma \ref{lemme_utile}}
All along the appendix, we will denote by $C$ positive constants that can change from line to line. 
\begin{proof}[\textbf{Proof of (i).}] 
We are going to follow the appendix A of \cite{Mirrahimi1}. 

Let $\delta < \frac{1}{2}$ be a positive constant. By a compactness argument, we only have to prove it for $|x|>1$. We compute 

\begin{align*}
\left|\frac{L^{\alpha}(g)(x)}{g(x)}\right|&=\left|\int_{\mathbb{R}^d}\left(\frac{1+|x|^{d+2\alpha}}{1+|x+h|^{d+2\alpha}}-1\right)\frac{\beta (x,\frac{h}{|h|})dh}{|h|^{d+2\alpha}}\right| \\
&\leq \int_{\mathbb{R}^{d}\backslash[ B(-x,\delta|x|) \cup B(0,\delta)]}\left|\frac{1+|x|^{d+2\alpha}}{1+|x+h|^{d+2\alpha}}-1\right|\frac{\beta (x,\frac{h}{|h|}) dh}{|h|^{d+2\alpha}} \\
&+\int_{ B(-x,\delta|x|) \backslash B(0,\delta)}\left|\frac{1+|x|^{d+2\alpha}}{1+|x+h|^{d+2\alpha}}-1\right|\frac{\beta (x,\frac{h}{|h|}) dh}{|h|^{d+2\alpha}}\\
&+\int_{B(0,\delta)}\left|\frac{1+|x|^{d+2\alpha}}{1+|x+h|^{d+2\alpha}}-1\right|\frac{\beta (x,\frac{h}{|h|})dh}{|h|^{d+2\alpha}}\\
&=I_1 + I_2 + I_3.
\end{align*}
Let us begin by approximating $I_1$. 
\begin{align*}
I_1 &= \int_{\mathbb{R}^{d}\backslash[ B(-x,\delta|x|) \cup B(0,\delta)]}\left|\frac{1+|x|^{d+2\alpha}}{1+|x+h|^{d+2\alpha}}-1\right| \frac{\beta (x,\frac{h}{|h|})dh}{|h|^{d+2\alpha}} \\
& \leq \int_{\mathbb{R}^{d}\backslash[ B(-x,\delta|x|) \cup B(0,\delta)]}\left| \frac{C}{\delta^{d+2\alpha}}-1\right|\frac{\beta (x,\frac{h}{|h|}) dh}{|h|^{d+2\alpha}} \\
&\leq \frac{(C+1)}{\delta^{d+2\alpha}} B \ \mathrm{mes}(S^{d-1})\int_{\delta}^{+\infty} \frac{dh}{|h|^{1+2\alpha}} =  \frac{C}{\delta^{d+4\alpha}}.  
\end{align*}
For $I_2$, we write:
\begin{align*}
I_2 &= \int_{ B(-x,\delta|x|) \backslash B(0,\delta)}\left|\frac{1+|x|^{d+2\alpha}}{1+|x+h|^{d+2\alpha}}-1\right|\frac{\beta (x,\frac{h}{|h|}) dh}{|h|^{d+2\alpha}}\\
& \leq B \int_{ B(-x,\delta|x|) \backslash B(0,\delta)}\left|\frac{1+|x|^{d+2\alpha}}{1+|x+h|^{d+2\alpha}}-1\right|\frac{dh}{|h|^{d+2\alpha}}\\
& \leq B \int_{ B(-x,\delta|x|) \backslash B(0,\delta)}\frac{\left||x|^{d+2\alpha}-|x+h|^{d+2\alpha}\right|}{1+|x+h|^{d+2\alpha}} \frac{dh}{|h|^{d+2\alpha}}\\
& \leq B \int_{ B(-x,\delta|x|) \backslash B(0,\delta)}\frac{|x|^{d+2\alpha}+|x+h|^{d+2\alpha}}{1+|x+h|^{d+2\alpha}} \frac{dh}{|h|^{d+2\alpha}}\\
& \leq B \int_{ B(-x,\delta|x|) \backslash B(0,\delta)}\frac{|x|^{d+2\alpha}+|\delta x|^{d+2\alpha}}{1+|x+h|^{d+2\alpha}} \frac{dh}{|h|^{d+2\alpha}}\\
& \leq C \int_{ B(-x,\delta|x|) \backslash B(0,\delta)}\frac{1}{1+|x+h|^{d+2\alpha}} \frac{|x|^{d+2\alpha}}{|h|^{d+2\alpha}}dh.
\end{align*}
But we know that $h \in B(-x,\delta|x|) \backslash B(0,\delta)$, using that $\delta<\frac{1}{2}<|x|$, we deduce that 
\[|x|(1-\delta) \leq |h| \leq (1+\delta)|x|\Rightarrow \left| \frac{x}{h} \right| \leq \left| \frac{1}{1-\delta} \right|.\]
Thus, we deduce
\begin{align*}
I_2& \leq \frac{C}{(1-\delta)^{d+2\alpha}}  \int_{ B(-x,\delta|x|) \backslash B(0,\delta)}\frac{1}{1+|x+h|^{d+2\alpha}}dh \\
& \leq \frac{C}{(1-\delta)^{d+2\alpha}} \int_{0}^{\delta|x|}\frac{r^{d-1}}{1+r^{d+2\alpha}}dr\\
& \leq \frac{C}{(1-\delta)^{d+2\alpha}} \int_{0}^{\infty}\frac{r^{d-1}}{1+r^{d+2\alpha}}dr.
\end{align*}

%
%
To control $I_3$, we write $I_3$ in the following form:
\[I_3=C\left|\int_{B(0,\delta)}\left(\frac{1+|x|^{d+2\alpha}}{1+|x+h|^{d+2\alpha}}+\frac{1+|x|^{d+2\alpha}}{1+|x-h|^{d+2\alpha}}-2\right)\frac{\beta (x,\frac{h}{|h|})dh}{|h|^{d+2\alpha}}\right|.\]
Next, we define
\[f(x,h):=\frac{1+|x|^{d+2\alpha}}{1+|x+h|^{d+2\alpha}}.\]
Since for all $x \in \mathbb{R}^d$, the map that $(h \mapsto f(x,h))$ is $C^{1+2\alpha}$, we know that $I_3$ is well defined. Moreover for every $h \in B(0,\delta) \backslash \left\lbrace 0 \right\rbrace$, when the parameter $|x|$ tends to $\infty$, we have that $\frac{(f(x,h)+f(x,-h)-2)\beta(x,\frac{h}{|h|})}{|h|^{d+2\alpha}}$ tends to $0$. So we deduce thanks to the dominated convergence Theorem, that $(x \mapsto I_3(x))$ tends to $0$ when $|x|$ tends to $\infty$. According to the continuity of the maps $(x \mapsto f(x,h))$ and $(x \mapsto \beta(x, \theta))$, we deduce that the map $(x \mapsto I_3(x))$ is continuous and so  we conclude that $I_3$ is bounded independently of $x$. We refer to \cite{Mirrahimi1} for more details (see the Annex A1).\\
Combining the above inequalities, we obtain that there exists a constant $C$ such that for all $x \in \mathbb{R}^d$, 
\begin{equation}
| L^{\alpha} g (x) | \leq C g(x). 
\end{equation}
Using the above inequality, we can conclude with a change of variable $z=ay$:
\begin{align*}
| L^{\alpha} g (ax) |& =| \int_{\mathbb{R}^d} \dfrac{g(ax)-g(ax+ay)}{|y|^{d+2\alpha}}\beta(ax,\frac{y}{|y|})dy|\\
&=|\int_{\mathbb{R}^d} \dfrac{g(ax)-g(ax+z)}{|a^{-1}z|^{d+2\alpha}}a^{-d} \beta(ax,\frac{a^{-1}z}{|a^{-1}z|})dz|\\
&=a^{2\alpha} |L^\alpha (g)(ax)|\\
&\leq C a^{2\alpha} g(ax).
\end{align*}
Finally, we obtain 
\[|L^\alpha g(ax) |\leq C a^{2\alpha} g(ax).\]

\textbf{\textit{Proof of (ii).}} Since all the functions involved in $\widetilde{K}$ are differentiable, and thanks to the dominated convergence theorem, we deduce that $\widetilde{K}$ is continuous. We can note the following fact:
\begin{equation}
|\nabla g(x)| = O(|x|^{-(d+2\alpha+1)}) \ \text{ as } |x|\rightarrow +\infty.
\label{eq6}
\end{equation}
With the change of variable $\widetilde{x}= a y$, we find:
\begin{equation}\label{eq7}
\widetilde{K}(g (a . ),\chi )(x)= a^{2\alpha} C' \  PV \int_{\mathbb{R}^d}\frac{(g(a x)-g(\widetilde{x}))(\chi( x)-\chi(a^{-1} \widetilde{x}))}{|a x-\widetilde{x}|^{d+2\alpha}}\beta (x,\frac{ax-\widetilde{x}}{|ax-\widetilde{x}|})d\widetilde{x}.
\end{equation}
Since $\chi \in C^1(\mathbb{R}^d)\cap L^{\infty}(\mathbb{R}^d)$, $\beta \in L^{\infty}(\mathbb{R}^d \times S^{d-1})$ and $g \in C^1(\mathbb{R}^d)\cap L^{\infty}(\mathbb{R}^d)$ this integral converges in $\mathbb{R}^d$. For $x\in \mathbb{R}^d$, we have to estimate 
\[J(x)=a^{2\alpha} C' \ PV\int_{\mathbb{R}^d}\frac{(g(x)-g(\widetilde{x}))(\chi( a^{-1}x)-\chi(a^{-1} \widetilde{x}))}{| x-\widetilde{x}|^{d+2\alpha}}\beta (a^{-1}x,\frac{x-\widetilde{x}}{|x-\widetilde{x}|})d\widetilde{x}\]
at point $a x$. We define for $x\in \mathbb{R}^d$
\[J_1(x)=a^{2\alpha} C'\ PV\int_{B(x,1)}\frac{(g(x)-g(\widetilde{x}))(\chi( a^{-1}x)-\chi(a^{-1} \widetilde{x}))}{| x-\widetilde{x}|^{d+2\alpha}}\beta (a^{-1}x,\frac{x-\widetilde{x}}{|x-\widetilde{x}|})d\widetilde{x},\]
\[\text{and } J_2(x)=a^{2\alpha} C' \ PV\int_{\mathbb{R}^d\backslash B(x,1)}\frac{(g(x)-g(\widetilde{x}))(\chi( a^{-1}x)-\chi(a^{-1} \widetilde{x}))}{| x-\widetilde{x}|^{d+2\alpha}}\beta (a^{-1}x,\frac{x-\widetilde{x}}{|x-\widetilde{x}|})d\widetilde{x},\]
so that $J=J_1+J_2$.
We split the proof in two parts: when $|x|\leq M$ and when $|x|>M$ with $M$ a positive constant arbitraly large.\\
For $|x|\leq M$, according to \eqref{eq7}
\[\widetilde{K}(g(a \cdot), \chi) (x) = a^{2\alpha} J(ax).\]
First we prove the existence of a constant $C$ large enough such that 
\begin{equation}\label{eq8}
\forall x \in \overline{B}(0,M), \ |J(x)|\leq Cg(x).
\end{equation}
Since $|J|$ is continuous, we deduce that in $\overline{B}(0,M)$, $|J|$ is bounded by a constant $D$. Thus, since $g$ si decreasing, if we take $C$ larger than $D \times (1+M^{d+2\alpha})$, the assertions \eqref{eq8} holds true. Since $a<1$, we conclude that for all $x \in \overline{B}(0,M)$:
\[ |\widetilde{K}(g(a\cdot),\chi)(x)| = a^{2\alpha}|J(ax) | \leq a^{2\alpha} \dfrac{C}{1+|ax|^{d+2\alpha}} \leq a^{2\alpha-\gamma} C \dfrac{1+|x|^{d+2\alpha}}{1+|ax|^{d+2\alpha}}  \dfrac{1}{1+|x|^{d+2\alpha}} \leq a^{2\alpha-\gamma} Cg(x) .\]

For $|x|>M$, we first study $J_1$ and then $J_2$.\\
\textit{Estimate of $J_1$:} From the formula \eqref{eq6}, for $|x|>M$, since $\chi$ is $C^{1}(\mathbb{R}^d)$ and periodic, and since $\gamma < 1$ and $2\alpha -\gamma$ is strictly positive, we have:
\begin{align*}
|J_1(x)| &\leq CB \int_{B(x,1)} \frac{a^{2\alpha-\gamma}|x-\widetilde{x}|^{\gamma}}{|x-\widetilde{x}|^{d+2\alpha}}\underset{z\in[x;\widetilde{x}]}{\sup}|\nabla g(z)||x-\widetilde{x}|d\widetilde{x} \\
&\leq C \frac{a^{2\alpha - \gamma}}{|x|^{d+2\alpha}}\int_{B(x,1)}\frac{1}{|x-\widetilde{x}|^{d+2\alpha-\gamma-1}}d\widetilde{x} \\
&\leq \frac{a^{2\alpha - \gamma}D_1}{1+|x|^{d+2\alpha}}.
\end{align*}

\textit{Estimate of $J_2 $:} Since $\chi$ is bounded and $a < 1$, we obtain:
\begin{align*}
|J_2(x)| &\leq a^{2\alpha} CB \int_{|y|\geq 1} \frac{g(x)}{|y|^{d+2\alpha}}dy +a^{2\alpha} CB \int_{|y|\geq 1} \frac{g(x+y)}{|y|^{d+2\alpha}}dy \\
&\leq a^{2\alpha}CB g(x)+a^{2\alpha}CB \int_{|y|\geq \frac{|x|}{2}} \frac{g(x+y)}{|y|^{d+2\alpha}}dy+a^{2\alpha}CB \int_{1\leq|y|\leq\frac{|x|}{2}} \frac{g(x+y)}{|y|^{d+2\alpha}}dy \\
&\leq a^{2\alpha}\frac{CB}{|x|^{d+2\alpha}}+a^{2\alpha} \frac{2^{d+2\alpha}CB}{|x|^{d+2\alpha}}\int_{\mathbb{R}^d} g(y)dy+a^{2\alpha}CB \int_{1\leq|y|\leq\frac{|x|}{2}} \frac{g(\frac{x}{2})}{|y|^{d+2\alpha}}dy \\
&\leq a^{2\alpha}\frac{CB}{|x|^{d+2\alpha}}+a^{2\alpha} \frac{2^{d+2\alpha}CB}{|x|^{d+2\alpha}}\int_{\mathbb{R}^d} g(y)dy+a^{2\alpha}CB 2^{d+2\alpha}g(x) \int_{|y|\geq 1} \frac{1}{|y|^{d+2\alpha}}dy \\
&\leq a^{2\alpha-\gamma}\frac{D_2}{1+|x|^{d+2\alpha}}.
\end{align*}
The third line is obtained noting that for $|y|\leq \frac{|x|}{2}$, we have $\frac{|x|}{2} \leq |x|-|y| \leq |x+y|$. 

Putting all together we find the existence of $C$ such that \textit{(ii)} holds.
\end{proof}

\textbf{Acknowledgments}

I want to thank Sepideh Mirrahimi and Jean-Michel Roquejoffre for all their precious advice and all the fruitful discussions. {The  research  leading to  these  results  has  received  fundings  from  the  European  Research  Council  under the European Union's Seventh Framework Program (FP/2007-2013) / ERC Grant Agreement n.321186 - ReaDi - Reaction-Diffusion Equations, Propagation and Modeling and from the french ANR project MODEVOL ANR-13-JS01-0009}. 
\bibliographystyle{plain} 

\bibliography{/home/aleculie/Dropbox/Documents/Bibliographie/bibliographie.bib} 

\begin{thebibliography}{10}

\bibitem{Aronson-Weinberger}
D.~G. Aronson and H.~F. Weinberger.
\newblock Multidimensional nonlinear diffusion arising in population genetics.
\newblock {\em Adv. in Math.}, 30(1):33--76, 1978.

\bibitem{Opt_geom_3}
G.~Barles, L.~C. Evans, and P.~E. Souganidis.
\newblock Wavefront propagation for reaction-diffusion systems of {PDE}.
\newblock {\em Duke Math. J.}, 61(3):835--858, 1990.

\bibitem{Berestycki81}
H.~Berestycki.
\newblock Le nombre de solutions de certains probl\`emes semi-lin\'{e}aires
  elliptiques.
\newblock {\em J. Funct. Anal.}, 40(1):1--29, 1981.

\bibitem{Berestycki_Hamel_Nadirashvili}
H.~Berestycki, F.~Hamel, and N.~Nadirashvili.
\newblock The speed of propagation for {KPP} type problems. {I}. {P}eriodic
  framework.
\newblock {\em J. Eur. Math. Soc. (JEMS)}, 7(2):173--213, 2005.

\bibitem{Roquejoffre1}
H.~Berestycki, J.M. Roquejoffre, and L.~Rossi.
\newblock The periodic patch model for population dynamics with fractional
  diffusion.
\newblock {\em Discrete Contin. Dyn. Syst. Ser. S}, 4(1):1--13, 2011.

\bibitem{Bouin_al}
E.~Bouin, J.~Garnier, C.~Henderson, and F.~Patout.
\newblock Thin front limit of an integro-differential {F}isher-{K}pp equation
  with fat-tailed kernels.
\newblock {\em Preprint}, 2017.

\bibitem{Cabre-Coulon-Roquejoffre}
X.~Cabr\'e, A.C. Coulon, and J.M. Roquejoffre.
\newblock Propagation in {F}isher-{KPP} type equations with fractional
  diffusion in periodic media.
\newblock {\em C. R. Math. Acad. Sci. Paris}, 350(19-20):885--890, 2012.

\bibitem{Cabre-Roquejoffre}
X.~Cabr\'e and J.~M. Roquejoffre.
\newblock The influence of fractional diffusion in {F}isher-{KPP} equations.
\newblock {\em Comm. Math. Phys.}, 320(3):679--722, 2013.

\bibitem{Chen-Kumagai}
Z.~Q. Chen and T.~Kumagai.
\newblock Heat kernel estimates for stable-like processes on {$d$}-sets.
\newblock {\em Stochastic Process. Appl.}, 108(1):27--62, 2003.

\bibitem{Coulon}
A.~C. Coulon~Chalmin.
\newblock {\em Fast propagation in reaction-diffusion equations with fractional
  diffusion}.
\newblock PhD thesis, University Toulouse 3, 2014.

\bibitem{Del-Castillo-Negrete}
D.~Del-Castillo-Negrete, B.~A. Carreras, and V.~Lynch.
\newblock Front propagation and segregation in a reaction-diffusion model with
  cross-diffusion.
\newblock {\em Phys. D}, 168/169:45--60, 2002.
\newblock VII Latin American Workshop on Nonlinear Phenomena (Morelos, 2001).

\bibitem{Evans_visco_1}
L.~C. Evans.
\newblock The perturbed test function method for viscosity solutions of
  nonlinear {PDE}.
\newblock {\em Proc. Roy. Soc. Edinburgh Sect. A}, 111(3-4):359--375, 1989.

\bibitem{Evans_visco_2}
L.~C. Evans.
\newblock Periodic homogenisation of certain fully nonlinear partial
  differential equations.
\newblock {\em Proc. Roy. Soc. Edinburgh Sect. A}, 120(3-4):245--265, 1992.

\bibitem{Opt_geom_1}
L.~C. Evans and P.~E. Souganidis.
\newblock A {PDE} approach to geometric optics for certain semilinear parabolic
  equations.
\newblock {\em Indiana Univ. Math. J.}, 38(1):141--172, 1989.

\bibitem{Fisher}
R.~A. Fisher.
\newblock The wave of advance of advantageous genes.
\newblock {\em Annals of Eugenics}, 7(4):355--369, 1937.

\bibitem{Opt_geom_2}
M.~Freidlin.
\newblock Limit theorems for large deviations and reaction-diffusion equations.
\newblock {\em Ann. Probab.}, 13(3):639--675, 1985.

\bibitem{Freidlin_Gertner}
J.~{G\"artner} and M.~Freidlin.
\newblock The propagation of concentration waves in periodic and random media.
\newblock {\em Dokl. Akad. Nauk SSSR}, 249(3):521--525, 1979.

\bibitem{KPP1937}
A.~Kolmogorov, I.~Petrovskii, and N.~Piscunov.
\newblock A study of the equation of diffusion with increase in the quantity of
  matter, and its application to a biological problem.
\newblock {\em Byul. Moskovskogo Gos. Univ.}, 1(6):1--25, 1938.

\bibitem{Mirrahimi1}
S.~M\'el\'eard and S.~Mirrahimi.
\newblock Singular limits for reaction-diffusion equations with fractional
  {L}aplacian and local or nonlocal nonlinearity.
\newblock {\em Comm. Partial Differential Equations}, 40(5):957--993, 2015.

\bibitem{Rossi_KPP}
L.~Rossi.
\newblock The {F}reidlin-{G}\"artner formula for general reaction terms.
\newblock {\em Adv. Math.}, 317:267--298, 2017.

\bibitem{Souganidis_Tarfulea}
P.~E. Souganidis and A.~Tarfulea.
\newblock Front propagation for non-local {KPP} reaction diffusion equations in
  periodic media.
\newblock {\em Preprint}, 2017.

\end{thebibliography}


\end{document}